\tiny\color{gray},
\newtheorem{theorem}{Theorem}
\newtheorem{lemma}[theorem]{Lemma}
\newtheorem{definition}[theorem]{Definition}
\newtheorem{corollary}[theorem]{Corollary}
  \renewcommand*\env@matrix[1][*\c@MaxMatrixCols c]{%
    \hskip -\arraycolsep
    \let\@ifnextchar\new@ifnextchar
  \array{#1}}
\newcommand{\Z}{\mathbb{Z}}
\newcommand{\F}{\mathbb{F}}
\providecommand{\keywords}[1]
{
  \small	
  \textbf{\textit{Keywords---}} #1
}
\begin{document}
\author{
  Jos\'e Gustavo Coelho,\\
  \and
  Fabio Enrique Brochero Mart\'{\i}nez\\
}
\date{September 2022}
\title{Counting roots of fully triangular polynomials over finite fields}
\maketitle
%%%%%%%%%%%%%%%%%%%%%%%%%%%%%%%%%%%%%%
\begin{abstract}
Let $\F_q$ be a finite field with $q$ elements, $f \in \F_q[x_1, \dots, x_n]$ a polynomial in $n$ variables and let us denote by $N(f)$ the number of roots of $f$ in $\F_q^n$. %Many authors, such as Wei Cao and Kung Jiang have used augmented degree matrices to determine $N(f)$ for different families of polynomials. 
In this paper we consider the family of fully triangular polynomials, i.e., polynomials  of the form
\begin{equation*}
    f(x_1, \dots, x_n) = a_1 x_1^{d_{1,1}} + a_2 x_1^{d_{1,2}} x_2^{d_{2,2}} + \dots + a_n x_1^{d_{1,n}}\cdots x_n^{d_{n,n}} - b,
\end{equation*}
where $d_{i,j} > 0$ for all $1 \le i \le j \le n$. For these polynomials, we obtain explicit formulas for $N(f)$ when the augmented degree matrix of $f$ is row-equivalent to the augmented degree matrix of a linear polynomial or a quadratic diagonal polynomial. 
\end{abstract}

\keywords{
    triangular polynomials, degree matrix, augmented degree matrix, generalized Markoff-Hurwitz equation
}

\section{Introduction}

%We'll be using methods using augmented degree matrices, which have previously been used (see \cite{cao 2010}, \cite{cao 2017} and \cite{wei cao}) to obtain the number of rational points of some hypersurfaces over finite fields.
The classical Markov equation is of the form $x^2+y^2+z^2=3xyz$ and it was studied by Markov in the integer ring. In particular, he showed that the solutions of this equation satisfy a recursive relation, such that the solutions can  be ordered forming a binary tree. Hurwitz considered a generalization of the form 
$$x_1^2+\cdots+x_n^2=a x_1x_2\cdots x_n,$$
and he showed that this equation does not have non-trivial integer solutions when $a>n\ge 3$. 

This equation can be further generalized in many  ways such as changing the number of variables and the exponents, considering the equation in other rings or fields, etc. Many authors have considered generalizations where the equation is over finite fields.

%Let $\F_q$ be a finite field of order $q$.
One possible generalization is to consider equations of the form 
\begin{equation}
  a_1 x_1^{m_1} + a_2 x_2^{m_2} + \dots + a_n x_n^{m_n} = b x_1 \cdots x_n,
\end{equation}
where $m_j>0$, $a_j\in \F_q^*$ for all $j=1,\dots, n$ and $b\in \F_q^*$, where $\F_q$ is the finite field with $q$ elements. These equations were studied by Carlitz and Baoulina \cite{carlitz, baoulina}, which determined the number of solutions in $\F_q^n$ under certain restrictions of the exponents.

The generalized Markoff-Hurwitz equation is an equation of the form
\begin{equation}\label{markoff}
  x_1^{m_1} + x_2^{m_2} + \dots + x_n^{m_n} = b x_1^{t_1} x_2^{t_1} \dots x_n^{t_n}.
\end{equation}
where $m_j, t_j>0$ for all $j=1,\dots, n$ and $b\in \F_q^*$. The equation in the case when $m_1=m_2=\dots=m_n=n$ and $t_1=\dots=t_n=1$ defines a hypersurface known as   Calabi-Yau's hypersurface and it has been intensively studied by some authors \cite{calabi yau 1, calabi yau 2}. 
Some results about the number of solutions for the general equation (\ref{markoff}) can be found in the literature; for instance,  the number of solutions over $\F_q$ was calculated by Carlitz in the case when $\gcd (m\sum_{i=1} ^n t_i/m_i -m, q-1)=1$, where $m = m_1 m_2 \cdots m_n$. The case when $\gcd (m\sum_{i=1} ^n t_i/m_i -m, q-1)>1$ was considered by Cao, Jiang and Gao \cite{wei cao, cao 2010}, assuming some arithmetic conditions.
%, the number of solutions in $\F_q$ of the generalized Markoff-Hurwitz Equation, i.e., equations of the form

Cao, Wen and Wang \cite{cao 2017} have also determined the number of solutions in $\F_q^n$ for equations of the form
\begin{equation*}
    a_1 x_1^{d_{1,1}} \cdots x_n^{d_{1,n}} + \dots + a_n x_1^{d_{n,1}}\cdots x_n^{d_{n,n}} = 0,
\end{equation*}
where $d_{i,j} > 0$, i.e., all exponents are positive, assuming that the matrix $(d_{i,j})_{i,j}$ is row equivalent to a diagonal  matrix  $D$, when the elements in the diagonal of $D$ are only 1's and 2's.

% Consider the polynomial ring $\F_q[x_1, \dots, x_n]$. For any polynomial $f \in \F_q[x_1, \dots, x_n]$, denote by $N(f)$ the number of roots of $f$ in $\F_q$ and by $N^*(f)$ the number of roots in $\F_q^*$. We'll be seeing some particular cases for which $N(f)$ can be computed explicitly for the polynomial
In this paper we will determine the number of solutions of equations %of the form
\begin{equation*}
    a_1 x_1^{d_{1,1}} + a_2 x_1^{d_{1,2}} x_2^{d_{2,2}} + \dots + a_n x_1^{d_{1,n}}\cdots x_n^{d_{n,n}} = b,
\end{equation*}
where $d_{i,j} > 0$ for all $1 \le i \le j \le n$ and the exponents $d_{i,j}$ satisfy some arithmetical conditions.  In fact, we show sufficient conditions in order to the equation to have, in $(\F_q^*)^n$, the same number of solutions  of a more simple equation. In this case we say that the equations are $*$-equivalent.  Next, we use this equivalence in order to calculate the total number of solutions. 

%\begin{equation}\label{definição f}
    %f(x_1, \dots, x_n) = a_1 x_1^{d_{1,1}} + a_2 x_1^{d_{1,2}} x_2^{d_{2,2}} + \dots + a_n x_1^{d_{1,n}}\cdots x_n^{d_{n,n}} - b \in \F_q[x_1, \dots, x_n]
%\end{equation}
%where $d_{i,j} > 0,$ for all $1 \le i \le j \le n$.

The remainder of the paper will be organized as follows. In Section 2, we will introduce some preliminary results. In Section 3, we will describe triangular polynomials and relations among them. The main results will be given in Section 4.

\section{Preliminaries}
Let $p$ be a prime number, $q$ a power of $p$ and $\F_q[x_1, \dots, x_n]$ the polynomial ring over $\F_q$ with $n$ variables. For each $D=(d_1, \dots, d_n) \in \Z_{\ge 0}^n$ let us define the monomial $X^D = x_1^{d_1} \cdots x_n^{d_n}$. Given a polynomial $f \in \F_q[x_1, \dots, x_n]$ of the form
\begin{equation}\label{f generico}
    f(x_1, \dots, x_n) = \sum_{j=1}^m a_j X^{D_j},
\end{equation}
where $D_j = (d_{1j}, \dots, d_{nj}) \in \Z_{\ge 0}^n$ and $a_j \ne 0$ for all $j = 1, \dots, m$, we define $N(f)$ as the number of roots of $f(x_1, \dots,$ $ x_n)$ over $\F_q^n$ and $N^*(f)$ as the number of roots over $(\F_q^*)^n$. Let us define the degree matrix of $f$ as $D_f = (D_1^T, \dots, D_m^T)$ and the augmented degree matrix of $f$ as $\tilde{D}_f = ((\tilde{D_1})^T, \dots, (\tilde{D_m})^T)$, where $(\tilde{D_j})^T = (1, D_j)$.

%Let us fix $\delta_{q-1}$ a $(q-1)$-th complex root of unity and a prime ideal $\mathfrak{p}$ of $\Z[\delta_{q-1}]$ over $p$. The quotient ring $\Z[\delta_{q-1}]/\mathfrak{p}$ is a finite field with $q$ elements. We identify $\Z[\delta_{q-1}]/\mathfrak{p}$ with $\F_q$. The Teichmüller character is an isomorphism
%$$\omega: \F_q^* \rightarrow \{1, \delta_{q-1}, \delta_{q-1}^2, \dots, \delta_{q-1}^{q-2}\}$$
%satisfying $\varphi(a) \equiv a \mod \mathfrak{p}$ for all $a \in \F_q^*$. Furthermore, $\omega$ has order $q-1$, thus all characters over $\F_q^*$ are generated by $\omega$ \cite[\S 3.6.2]{teichmuller}. We extend $\omega$ to a multiplicative character over $\F_q$ defining $\omega(0) = 0$. Let $Tr$ be trace from $\F_q$ to $\F_p$, let us define the $(q-1)$ Gauss sums over $\F_q$ as
It is well known that the group of multiplicative characters of a finite field is cyclic. Let $\omega$ be a multiplicative character over $\F_q$ with order $q-1$, and therefore $\omega$ is a generator of the multiplicative characters group, i.e., $\widehat{\F_q^*} = \{\omega^k: k = 0, 1, \dots, q - 2\}$. Let us define $Tr$ as the trace function from $\F_q$ to $\F_p$ and $\delta_p$ be a primitive $p$-th complex root of unity. For each integer $0 \le k \le q - 2$, we define the Gauss sum of $\omega^{-k}$ over $\F_q$ as follows:
$$G(k)= \sum_{a \in \F_q^*} \omega(a)^{-k} \delta_p^{Tr(a)}.$$
%where $0 \le k \le q - 2$. The following result allows us to express $N^*(f)$ in terms of the Teichmuller character and the Gauss sums.
The following result allows us to express $N^*(f)$ in terms of $\omega$ and the Gauss sums.

\begin{lemma}\label{lema do caractere}
Let $f$ be a polynomial of the form (\ref{f generico}), then
$$N^*(f) = \frac{(q - 1)^n}{q} + \frac{(q - 1)^{n+1-m}}{q}\sum \prod_{j=1}^m \omega(a_j)^{v_j} G(v_j),$$
where the sum is taken over all vectors $v = (v_1, \dots, v_m)$ with $0 \le v_i \le q - 2$ for $i = 1, \dots, m$ such that $\tilde{D}_fv^T \equiv 0 \pmod{q - 1}$.
\end{lemma}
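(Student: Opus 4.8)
The plan is to run the standard additive-character/orthogonality argument. First I would detect the condition $f(x)=0$ using the additive characters of $\F_q$: taking the canonical additive character $y\mapsto\delta_p^{Tr(y)}$, one has $\frac1q\sum_{c\in\F_q}\delta_p^{Tr(cy)}$ equal to $1$ if $y=0$ and to $0$ otherwise, since for $y\neq 0$ the map $c\mapsto cy$ is a bijection of $\F_q$ and $\sum_{z\in\F_q}\delta_p^{Tr(z)}=0$. Hence
$$N^*(f)=\frac1q\sum_{c\in\F_q}\ \sum_{x\in(\F_q^*)^n}\delta_p^{Tr(cf(x))}.$$
The term $c=0$ contributes exactly $\frac{(q-1)^n}{q}$, which is the first summand of the claimed formula, so it remains to analyze the contribution of $c\in\F_q^*$.

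For $c\neq 0$ and $x\in(\F_q^*)^n$, every monomial $c\,a_j X^{D_j}$ is a nonzero element of $\F_q$, and the key step is to expand each factor $\delta_p^{Tr(c\,a_jX^{D_j})}$ via multiplicative characters and Gauss sums. From $G(k)=\sum_{a\in\F_q^*}\omega(a)^{-k}\delta_p^{Tr(a)}$ and the orthogonality relations on $\widehat{\F_q^*}=\{\omega^k\}$, Fourier inversion gives, for every $t\in\F_q^*$,
$$\delta_p^{Tr(t)}=\frac{1}{q-1}\sum_{k=0}^{q-2}G(k)\,\omega(t)^k.$$
Substituting this into $\delta_p^{Tr(cf(x))}=\prod_{j=1}^m\delta_p^{Tr(c\,a_jX^{D_j})}$ for each $j$, using the multiplicativity $\omega(c\,a_jX^{D_j})=\omega(c)\,\omega(a_j)\prod_{i=1}^n\omega(x_i)^{d_{ij}}$, and interchanging the order of summation, I would separate out a sum over the vectors $v=(v_1,\dots,v_m)$ with $0\le v_j\le q-2$, with coefficient $\prod_j\omega(a_j)^{v_j}G(v_j)$, multiplied by an inner sum over $x$ and $c$.

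That inner sum over $x\in(\F_q^*)^n$ factors as $\prod_{i=1}^n\big(\sum_{x_i\in\F_q^*}\omega(x_i)^{\sum_j d_{ij}v_j}\big)$, and each factor equals $q-1$ when $\sum_j d_{ij}v_j\equiv 0\pmod{q-1}$ and vanishes otherwise; these $n$ congruences together are exactly $D_f v^T\equiv 0\pmod{q-1}$. Likewise the leftover sum $\sum_{c\in\F_q^*}\omega(c)^{\sum_j v_j}$ equals $q-1$ or $0$ according to whether $\sum_j v_j\equiv 0\pmod{q-1}$, and adjoining this congruence to the previous $n$ is precisely $\tilde D_f v^T\equiv 0\pmod{q-1}$. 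Keeping track of the constants $\frac1q$, $\frac1{(q-1)^m}$, $(q-1)^n$ and $(q-1)$ then yields the stated identity. The argument is essentially bookkeeping; the only points needing care are justifying the Fourier-inversion formula for $\delta_p^{Tr(t)}$ — which is exactly why restricting to $(\F_q^*)^n$, so that every monomial argument is nonzero, is essential — and correctly tracking the powers of $q-1$ through the interchanges of summation.
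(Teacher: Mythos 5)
Your argument is correct and complete: the orthogonality decomposition over $c\in\F_q$, the Fourier inversion $\delta_p^{Tr(t)}=\frac{1}{q-1}\sum_{k}G(k)\omega(t)^k$ (valid precisely because $c$, the $a_j$ and the $x_i$ are all nonzero), and the bookkeeping of powers of $q-1$ all check out, yielding exactly the stated constant $\frac{(q-1)^{n+1-m}}{q}$ and the congruence condition $\tilde{D}_f v^T\equiv 0\pmod{q-1}$. The paper itself gives no proof here, only a citation to Lemma 2.4 of \cite{wei cao}, and your write-up is the standard character-sum argument behind that cited result, so it is essentially the same route, just made self-contained.
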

\begin{proof}
See Lemma 2.4 in \cite{wei cao}.
\end{proof}

\begin{definition}
    Two polynomials $f = \sum_j^m a_j X^D_j$ and $g = \sum_j^m a_j X^{D_j^\prime}$ are said to be $*$-equivalent if they have the same coefficient vector $(a_1, \dots, a_m)$ and the congruences $\tilde{D}_f v^T \equiv 0 \pmod{q - 1}$ and $\tilde{D}_g v^T \equiv 0 \pmod{q - 1}$ have the same set of solutions.
\end{definition}

It follows from Lemma \ref{lema do caractere} that if $f$ and $g$ are $*$-equivalent polynomials, then $N^*(f) = N^*(g)$, i.e., they have the same number of roots over $(\F_q^*)^n$.

%It follows from Lemma \ref{lema do caractere} that if we have two polynomials $f = \sum_{j=1}^m a_j X^{D_j}$ and $g = \sum_{j=1}^m a_j X^{D_j^\prime}$ that have the same coefficient vector $(a_1, \dots, a_m)$ and such that the congruences $\tilde{D}_f v^T \equiv 0 \pmod{q - 1}$ and $\tilde{D}_g v^T \equiv 0 \pmod{q - 1}$ have the same set of roots, then they must have the same number of roots over $(\F_q^*)^n$. In that case we say that the polynomials are $*$-equivalent. %It is well known (see Theorem 0.2 and the remarks following it in \cite{advanced linear algebra}) that two matrices $D, E$ with coefficients over $\Z_{q-1}$ such that $D v^T \equiv 0 \pmod{q - 1}$ and $E v^T \equiv 0 \pmod{q - 1}$ have the same roots if there is an invertible matrix $M$ over $\Z_{q-1}$ such that $M D = E$ and in this case we say that $D, E$ are row-equivalent.

It is easy to check that the coefficient vectors of two polynomials are equal, but we'd like to know when the linear systems $\tilde{D}_f v^T = 0$ and $\tilde{D}_g v^T = 0$ have the same set of solutions. It can be verified that two matrices $D$ and $E$ with coefficients in $\Z_{q-1}^m$ such that $D v^T \equiv 0 \pmod{q - 1}$ and $E v^T \equiv 0 \pmod{q - 1}$ have the same set of solutions if there is an invertible matrix $M$ over $\Z_{q-1}$ such that $M D = E$, and in this case, we say that $D$ and $E$ are row-equivalent. Hence, if two polynomials $f$ and $g$ have the same coefficient vector and there is an invertible matrix $M$ over $\Z_{q-1}$ such that $M \tilde{D}_f = \tilde{D}_g$, then $f$ and $g$ are $*$-equivalent.

In particular, the elementary row operations are
\begin{enumerate}[$(i)$]
    \item swapping two rows;

    \item adding a multiple of a row to another;

    \item multiplying a row by an element in $\Z_{q-1}^*$;
\end{enumerate}
which can be represented by multiplying invertible matrices, so if we can apply these operations in $\tilde{D}_f$ to obtain $\tilde{D}_g$, the congruence systems have the same solutions. We will use this sufficient criterion to prove $*$-equivalency when needed.

%\textcolor{red}{AQUI EU FALAVA QUE A VOLTA NÃO NECESSARIAMENTE É VERDADEIRA, OU PROVAVA QUE A VOLTA TAMBÉM É VERDADE.}
%It is worth noting that the converse may not always be true. For instance, the polynomials $f(x,y) = x^2 + y^2 + 1$ and $g(x,y) = x^2 + y^4 + 1$ in $\F_{7}[x,y]$. It can be verified that $\tilde{D}_fv^t = 0$ and $\tilde{D}_g v^T =0$ have the same solution set. However, the invertible row operations of adding a row to another and multiplying rows by elements in $\Z_6^*$ cannot take $\tilde{D}_f$ to $\tilde{D}_g$. Therefore $f$ and $g$ are $*$-equivalent, but $\tilde{D}_f$ and $\tilde{D}_g$ are not row-equivalent.

It is worth noting that even though $N^*(f) = N^*(g)$ for two $*$-equivalent polynomials $f$ and $g$, that doesn't mean they have the same set of roots. For instance, the polynomials $f(x,y) = x^2y^3 + x y^2 $ and $g(x,y) = xy + x^3 y^2$ in $\F_5[x,y]$ are $*$-equivalent, but it can be verified that they have distinct sets of roots over $(\F_5^*)^2$.

\section{Triangular polynomials}
Let $f$ be a polynomial in $\F_q[x_1, \dots, x_k]$, and let us define $f_k \in \F_q[x_1, \dots, x_k]$ as
$$f_k (x_1, \dots, x_k) = f(x_1, \dots, x_k, 0, \dots, 0).$$
We say that $f$ and $g$ are totally $*$-equivalent if $f_k$ is $*$-equivalent to $g_k$ for all $1 \le k \le n$. In general, it is not true that $f$ being $*$-equivalent to $g$ implies that $f_k$ is $*$-equivalent to $g_k$ for all $k$. 

Let us introduce a class of polynomials for which a sufficient criterion for total $*$-equivalence can be determined. We say that $f\in \F_q[x_1, \dots, x_n]$ is a triangular polynomial if it is of the form
\begin{equation}\label{triangular superior genérico}
    f(x_1, \dots, x_n) = \sum_{i=1}^n a_j X^{D_j} - b;\quad a_1, \dots, a_n \in \F_q^*, \,b\in \F_q,
\end{equation}
where $D_j = (d_{1,j}, \dots, d_{j,j}, 0, \dots, 0)$, $d_{j,j} > 0$ for all $1 \le j \le n$  and $d_{i,j} \ge 0$ for all $1 \le i < j$. If we additionally have that $d_{i,j} > 0$ for all $1 \leq i \leq j \leq n$, we refer to this polynomial as a \textit{fully} triangular polynomial. % Notice that the polynomial $f$ as in (\ref{definição f}) is fully triangular, while diagonal polynomials are simply triangular.

\begin{lemma}
Let $f,g \in \F_q[x_1, \dots, x_n]$ be two $*$-equivalent triangular polynomials, $M$ the invertible $(n+1)\times(n+1)$ matrix over $\Z_{q-1}$ such that $M \tilde{D}_f = \tilde{D}_g$ and $M_k$ the submatrix obtained from $M$ by picking the first $k+1$ rows and columns. If $M_k$ is invertible, then $f_k$ and $g_k$ are also $*$-equivalent.
\end{lemma}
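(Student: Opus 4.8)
The key observation is that the augmented degree matrix $\tilde D_f$ of a triangular polynomial has a special block structure: since $D_j$ is supported on the first $j$ coordinates, the matrix $\tilde D_f$ (whose $j$-th column is $(1, d_{1,j}, \dots, d_{j,j}, 0, \dots, 0)^T$) is ``block upper triangular'' in the sense that deleting the last $n-k$ rows and the last $n-k$ columns leaves exactly the augmented degree matrix $\tilde D_{f_k}$ of the truncated polynomial $f_k$. The plan is to exploit this together with the hypothesis on $M_k$.

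First I would record precisely the truncation identity: if $\tilde D_f$ is written as a $(n+1)\times(n+1)$ matrix with columns indexed $0,1,\dots,n$ (column $0$ being the constant column, though here it is $(1,0,\dots,0)^T$ after moving $b$ to the right side — more carefully, the columns are indexed by the $n$ monomials), then picking the first $k+1$ rows and first $k$ columns of $\tilde D_f$ yields $\tilde D_{f_k}$, because the monomials $X^{D_1},\dots,X^{D_k}$ of $f_k$ involve only $x_1,\dots,x_k$, and the monomials $X^{D_{k+1}},\dots,X^{D_n}$ have a zero in every one of the first $k$ coordinate-rows beyond... — the precise bookkeeping of which rows/columns survive is the first step to nail down. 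The point is that the ``upper-left corner'' of $\tilde D_f$ agrees with $\tilde D_{f_k}$.

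Second, from $M\tilde D_f = \tilde D_g$, I would take the upper-left $(k+1)\times(k+1)$ block of both sides. Because $\tilde D_f$ and $\tilde D_g$ have the block-triangular shape above (zeros in the lower-left block), the block-matrix multiplication rule gives that the upper-left block of $M\tilde D_f$ equals $M_k \cdot (\text{upper-left block of } \tilde D_f) = M_k \tilde D_{f_k}$, the contribution of the off-diagonal blocks of $M$ vanishing against the zero block of $\tilde D_f$. Hence $M_k \tilde D_{f_k} = \tilde D_{g_k}$. Since $M_k$ is assumed invertible over $\Z_{q-1}$, this exhibits $\tilde D_{f_k}$ and $\tilde D_{g_k}$ as row-equivalent; combined with the fact that $f_k$ and $g_k$ have the same coefficient vector (they are obtained from $f$ and $g$, which share a coefficient vector, by setting the same variables to $0$, hence keeping the same coefficients $a_1,\dots,a_k$), the sufficient criterion for $*$-equivalence recalled after the definition yields that $f_k$ and $g_k$ are $*$-equivalent.

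The step I expect to be the main obstacle — really the only subtle point — is verifying cleanly that the block multiplication ``respects truncation'': that $\big(M\tilde D_f\big)_{k} = M_k (\tilde D_f)_k$, which requires knowing that $\tilde D_f$ has a genuine zero block in the bottom-left $(n-k)\times k$ position (so that the rows of $M$ indexed $>k$ never interact with the surviving columns) \emph{and} that $\tilde D_g$ does too (so that its upper-left block is exactly $\tilde D_{g_k}$). Both follow from the triangular form (\ref{triangular superior genérico}), but one must be careful that the truncation $f \mapsto f_k$ does not delete any of the monomials $X^{D_1},\dots,X^{D_k}$ — it does not, since $d_{j,j}>0$ forces each such monomial to survive with a positive exponent in $x_j$ only among the first $k$ variables, while $X^{D_{k+1}},\dots,X^{D_n}$ are killed because each contains a positive power of some $x_i$ with $i>k$. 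Once this is pinned down, everything else is the routine block-matrix computation sketched above.
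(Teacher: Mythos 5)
Your proposal is correct and follows essentially the same route as the paper: both exploit the zero lower-left block of the augmented degree matrices of triangular polynomials, extract from $M\tilde{D}_f=\tilde{D}_g$ the upper-left block identity $M_k\tilde{D}_{f_k}=\tilde{D}_{g_k}$, and conclude via invertibility of $M_k$ together with the shared coefficient vector. The truncation bookkeeping you flag as the only subtle point is exactly what the paper's block partition encodes, so there is no gap.
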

\begin{proof}
Since $f$ and $g$ are triangular matrices, we can partition their degree matrices and the matrix $M$ into blocks
\begin{equation*}
    \tilde{D}_f = \begin{bmatrix} \tilde{D}_{f_k} & D_1\\ 0 & D_2
    \end{bmatrix}, 
    \qquad
    \tilde{D}_g = 
    \begin{bmatrix}
    \tilde{D}_{g_k} & E_1\\
    0 & E_2
    \end{bmatrix},
    \qquad
    M = 
    \begin{bmatrix}
    M_k & N_1\\
    N_2 & N_3
    \end{bmatrix},
\end{equation*}
such that $\tilde{D}_{g_k}$ and $\tilde{D}_{f_k}$ are $(k+1) \times k$ blocks, $M_k$ is a $(k+1) \times (k+1)$ block and the blocks $D_1, D_2, E_1, E_2, N_1, N_2, N_3$ have appropriate dimensions. From the $*$-equivalency between $f$ and $g$ we know that
\begin{align*}
    \begin{bmatrix}
    \tilde{D}_{g_k} & E_1\\
    0 & E_2
    \end{bmatrix} &= \tilde{D}_g
    = M \tilde{D}_f\\
    &= \begin{bmatrix}
    M_k & N_1\\
    N_2 & N_3
    \end{bmatrix}
    \begin{bmatrix} \tilde{D}_{f_k} & D_1\\ 0 & D_2
    \end{bmatrix}\\
    &= \begin{bmatrix} M_k\tilde{D}_{f_k} & M_k D_1 + N_1 D_2\\ N_2 M_k & N_2 D_1 + N_3 D_2\\
    \end{bmatrix},
\end{align*}
where considering the equality of the upper left block gives us $M_k \tilde{D}_{f_k} = \tilde{D}_{g_k}$, implying that $f_k$ and $g_k$ are $*$-equivalent because $M_k$ is invertible.
\end{proof}

Hence, if $f,g$ are two $*$-equivalent triangular polynomials, with $M \tilde{D_{f}} = \tilde{D_{g}}$ and the submatrices $M_k$ are invertible for every $1 \le k \le n - 1$, then $f$ and $g$ are totally $*$-equivalent. We now present a specific set of operations which always results in transformation matrices that satisfy these conditions.

\begin{lemma}\label{equivalencia triangular superior}
Let $f\in \F_q[x_1, \dots, x_n]$ be a triangular polynomial. Let us denote by $r_1, \dots, r_{n+1}$ the rows in $\tilde{D}_f$ and consider the following invertible row operations:
\begin{enumerate}[(i)]
    \item $r_i \leftarrow c\cdot r_i$, $2 \le i \le n + 1$, $c \in \Z_{q-1}^*$.
    \item $r_j \leftarrow r_j + c \cdot r_i$, $2 \le j < i$, $c \in \Z_{q - 1}$.
\end{enumerate}

Any $*$-equivalency obtained using only these row operations is totally $*$-equivalent. %Also, all $*$-equivalencies between the augmented degree matrices of triangular polynomials can be obtained with only those operations.
\end{lemma}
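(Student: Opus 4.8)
The plan is to reduce to the preceding lemma by showing that the transformation matrix $M$ realizing the $*$-equivalency, i.e. the product of the elementary matrices corresponding to the operations $(i)$ and $(ii)$ with $M\tilde D_f = \tilde D_g$, is upper triangular with every diagonal entry a unit of $\Z_{q-1}$. Granting this, every leading principal submatrix $M_k$ is automatically invertible over $\Z_{q-1}$, and the preceding lemma finishes the argument.

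First I would record the elementary matrices. The operation $r_i \leftarrow c\cdot r_i$ with $2 \le i \le n+1$ and $c \in \Z_{q-1}^*$ corresponds to left multiplication by the diagonal matrix with $c$ in position $(i,i)$ and $1$'s elsewhere; since $i \ge 2$, this matrix fixes the first row. The operation $r_j \leftarrow r_j + c\cdot r_i$ with $2 \le j < i$ corresponds to left multiplication by $I + c\,E_{j,i}$, where $E_{j,i}$ has a single $1$ in position $(j,i)$; because $j < i$ this off-diagonal entry lies strictly above the diagonal, and because $j \ge 2$ the first row is again untouched. Hence each such elementary matrix is upper triangular, has all of its diagonal entries in $\Z_{q-1}^*$, and has first row $e_1^T$.

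Next I would observe that a product of upper triangular matrices is upper triangular, and that the diagonal of a product of triangular matrices is the entrywise product of the diagonals. Consequently $M$ is upper triangular, and each of its diagonal entries is a product of elements of $\Z_{q-1}^*$, hence a unit of $\Z_{q-1}$ (in particular $M$ is invertible over $\Z_{q-1}$, so it does yield a genuine $*$-equivalency). Now fix $1 \le k \le n-1$ and let $M_k$ be the submatrix of $M$ formed from its first $k+1$ rows and columns, as in the preceding lemma. Since $M$ is upper triangular, $M_k$ is upper triangular with diagonal entries equal to the first $k+1$ diagonal entries of $M$, all of which are units; therefore $\det M_k$ is a product of units, hence a unit of $\Z_{q-1}$, so $M_k$ is invertible over $\Z_{q-1}$. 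By the preceding lemma, $f_k$ and $g_k$ are $*$-equivalent for every $1 \le k \le n-1$, and for $k=n$ this holds by hypothesis since $f_n = f$ and $g_n = g$. Thus $f$ and $g$ are totally $*$-equivalent.

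I do not expect a genuine obstacle here: the content is the bookkeeping observation that operations $(i)$ and $(ii)$ never alter row $1$ and only ever add a \emph{lower} row to a \emph{higher} row, which is exactly what makes every elementary matrix — and hence their product $M$ — upper triangular with unit diagonal. The only point deserving a sentence of care is that the order in which the elementary matrices are multiplied to form $M$ is immaterial for this argument, since upper triangularity and the unit-diagonal property are preserved under products regardless of order.
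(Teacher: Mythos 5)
Your proposal is correct and follows essentially the same route as the paper: you show that the product $M$ of the elementary matrices for operations $(i)$ and $(ii)$ is upper triangular with unit diagonal (and untouched first row), so each leading principal submatrix $M_k$ has unit determinant and is invertible over $\Z_{q-1}$, and then the preceding lemma gives $*$-equivalence of $f_k$ and $g_k$ for all $k$. The paper simply writes down the resulting shape of $M$ directly, whereas you justify it via the individual elementary matrices; this is only a more explicit bookkeeping of the same argument.
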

\begin{proof}
Any matrix $M$ obtained from those operations is of the form
\begin{equation}\label{matriz M}
    M = \begin{bmatrix}
    1 & 0 & 0 & \cdots & 0 & 0 \\
    0 & m_{1,1} & m_{1,2} & \cdots & m_{1, n-1} & m_{1, n}\\
    0 & 0 & m_{2,2} & \cdots & m_{2, n-1} & m_{2, n}\\
    \vdots & \vdots & \vdots & \ddots & \vdots & \vdots\\
    0 & 0 & 0 & \cdots & m_{n-1, n-1} & m_{n-1, n} \\
    0 & 0 & 0 & \cdots & 0 & m_{n, n} \\
    \end{bmatrix},
\end{equation}
where $m_{i,j} \in \Z_{q-1}$ and the elements in the diagonal are invertible. For every $1 \le k < n-1$ the determinant of $M_k$ is $\prod_{i = 1}^{k} m_{i,i}$, which is invertible over $\Z_{q-1}$ and thus every $M_k$ is invertible, making the $*$-equivalency total.

%Now suppose two upper diagonal polynomials $f,g$ and suppose there is an invertible polynomial $M$ over $q-1$ such that $M \tilde{D}_f = \tilde{D}_g$. Since the first rows of $\tilde{D}_f$ and $\tilde{D}_g$ are equal, the first row of $M$ can be replaced with $(1, 0, \dots,0)$ without changing its action. Let's assume that there is a row $r_i$ such that the first non-zero entry is in the $j$-th column, $j < i$. That would imply that the entry $(M\tilde{D}_f)_{i,j}$ isn't equal to zero and thus $M\tilde{D}_f \ne \tilde{D}_g$, a contradiction. This implies that $M$ must be a triangular matrix and as such its determinant is the product of the elements on its diagonal. Since $M$ is invertible, this allows us to conclude that the elements on the diagonal are all invertible mod $q-1$.
\end{proof}

Although one could believe that all complete equivalences between triangular matrices can be attained using those two operations, this assumption is not correct. For instance, $f = x_1 + x_1^3 x_2^5,\, g = x_1^2 + x_1^4 x_2 \in \F_7[x_1, x_2]$ are totally $*$-equivalent polynomials, i.e., $M \tilde{D}_f = \tilde{D}_g$ where $M$ is the invertible matrix
$$M = \begin{bmatrix} 1 & 0 & 0\\ 1 & 1 & 0\\
0 & 0 & 5\end{bmatrix},$$
but from a straightforward calculation it can be proved that there is no invertible upper triangular matrix $N$ that satisfies $N \tilde{D_f} = \tilde{D_g}$. The following result tells us when a triangular polynomial is totally $*$-equivalent to a diagonal polynomial through the two operations given in Lemma \ref{equivalencia triangular superior}.

\begin{theorem}\label{triangular diagonal equivalence}
Let $f$ be a triangular polynomial of the form (\ref{triangular superior genérico}) and
\begin{equation*}
    g(x_1, \dots, x_n) = a_1 x_1^{e_1} + \dots + a_n x_n^{e_n} -b,
\end{equation*}
be a diagonal polynomial where $e_1, \dots, e_n \in \Z_{> 0}$. Then $f$ is totally $*$-equivalent to $g$ if the following two conditions are true:
\begin{enumerate}[(i)]
    \item for all $1 \le j \le n$ there is a $m_{j,j} \in \Z_{q-1}^*$ such that $d_{j,j} = m_{j,j}e_j$,
    
    \item for all $1 \le i < j \le n$ we have $\gcd (d_{j,j}, q - 1) \mid d_{i,j}$.
\end{enumerate}
\end{theorem}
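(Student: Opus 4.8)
The plan is to exhibit an explicit sequence of row operations of the two types permitted in Lemma~\ref{equivalencia triangular superior} carrying $\tilde{D}_f$ to $\tilde{D}_g$. Since $f$ and $g$ have the same coefficient vector $(a_1,\dots,a_n)$ (together with the constant $-b$ when $b\neq 0$), once such a sequence is found Lemma~\ref{equivalencia triangular superior} immediately yields that $f$ and $g$ are totally $*$-equivalent.

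First I would record the shapes. Denoting by $r_1,\dots,r_{n+1}$ the rows of $\tilde{D}_f$, the first row is $(1,\dots,1)$, the column attached to the constant term (which occurs exactly when $b\neq 0$) is $(1,0,\dots,0)^T$, and for $2\le i\le n+1$ the row $r_i$ equals $(0,\dots,0,\,d_{i-1,i-1},\,d_{i-1,i},\dots,d_{i-1,n},\,0)$ with $i-2$ leading zeros; hence the $n\times n$ block in rows $2,\dots,n+1$ and the first $n$ columns is upper triangular with diagonal $d_{1,1},\dots,d_{n,n}$. The matrix $\tilde{D}_g$ has the same first row and the same constant column, and its corresponding $n\times n$ block is $\mathrm{diag}(e_1,\dots,e_n)$. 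Because the operations (i) and (ii) of Lemma~\ref{equivalencia triangular superior} never modify $r_1$ and act trivially on the constant column, it is enough to transform that $n\times n$ block into $\mathrm{diag}(e_1,\dots,e_n)$.

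The elimination sweeps the columns in the order $j=n,n-1,\dots,2$. I claim that when column $j$ is reached the row $r_{j+1}$ has been reduced to the vector whose only nonzero entry is $d_{j,j}$, situated in column $j$: this is clear for $j=n$, and the operations done while processing a column $k>j$ only add multiples of the (by then already reduced) row $r_{k+1}$ — whose single nonzero entry lies in column $k\neq j$ — to rows of smaller index, so they neither change any column-$j$ entry nor leave $r_{j+1}$ with a surviving entry in a column $>j$. With $r_{j+1}$ in this form, for each $i$ with $1\le i<j$ I apply operation (ii), $r_{i+1}\leftarrow r_{i+1}+c_{i,j}\,r_{j+1}$, choosing $c_{i,j}\in\Z_{q-1}$ with $c_{i,j}\,d_{j,j}\equiv -d_{i,j}\pmod{q-1}$; such a $c_{i,j}$ exists precisely because $\gcd(d_{j,j},q-1)\mid d_{i,j}$, which is hypothesis (ii), and since $r_{j+1}$ has a single nonzero coordinate this operation only changes the column-$j$ entry of $r_{i+1}$, setting it to $0$ (note $2\le i+1<j+1$, so this is an admissible operation of type (ii)). After column $2$ is processed the block has become $\mathrm{diag}(d_{1,1},\dots,d_{n,n})$. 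Finally, for each $1\le j\le n$ I apply operation (i), $r_{j+1}\leftarrow m_{j,j}^{-1}\,r_{j+1}$, which is legitimate because $m_{j,j}\in\Z_{q-1}^*$ by hypothesis (i); this turns the diagonal entry $d_{j,j}=m_{j,j}e_j$ into $e_j$, so the block becomes $\mathrm{diag}(e_1,\dots,e_n)$ and $\tilde{D}_f$ becomes $\tilde{D}_g$.

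All operations used are of type (i) or (ii) of Lemma~\ref{equivalencia triangular superior}, so that lemma guarantees the resulting $*$-equivalence between $f$ and $g$ is total, which is the claim. The only steps needing care are the bookkeeping of the elimination — checking that sweeping the columns from right to left leaves every $d_{i,j}$ untouched until the operation that eliminates it and keeps each pivot $r_{j+1}$ in the required single-entry form — and the two observations that solvability of $c\,d_{j,j}\equiv -d_{i,j}\pmod{q-1}$ is exactly hypothesis (ii), while hypothesis (i) is invoked only in the closing rescaling of the rows. I do not anticipate any genuine obstacle beyond this routine verification.
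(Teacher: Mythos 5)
Your proof is correct and takes essentially the same route as the paper: both arguments exhibit an invertible upper-triangular transformation of exactly the kind allowed by Lemma \ref{equivalencia triangular superior} between the two augmented degree matrices, using hypothesis (ii) to solve the congruences mod $q-1$ that clear the off-diagonal entries and hypothesis (i) to adjust the diagonal, and then invoke that lemma for totality. The only difference is cosmetic: the paper writes down the transformation in one shot as a matrix $M$ with $M\tilde{D}_g=\tilde{D}_f$ (entries $m_{i,j}$ satisfying $d_{i,j}\equiv m_{i,j}e_j \pmod{q-1}$), whereas you carry out the equivalent column-by-column elimination explicitly in the opposite direction, from $\tilde{D}_f$ to $\tilde{D}_g$.
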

\begin{proof}
The augmented degree matrices of $f$ and $g$ are
{\footnotesize
$$\tilde{D}_f = \begin{bmatrix}[cccccc:c]
1 & 1 & 1& \cdots & 1 & 1 & 1\\ 
d_{1,1} & d_{1,2} & d_{1,3} &\cdots& d_{1,n-1}& d_{1,n}&0\\
0 & d_{2,2} & d_{2,3} &\cdots& d_{2,n-1}& d_{2,n}&0\\
0 & 0 & d_{3,3} &\cdots& d_{3,n-1} & d_{3,n}&0\\
\vdots & \vdots & \vdots &\ddots & \vdots & \vdots & \vdots\\
0 & 0 & 0 & \cdots & d_{n-1,n-1} & d_{n-1,n}&0\\
0 & 0 & 0 & \cdots & 0 & d_{n,n}&0
\end{bmatrix},\quad \tilde{D}_g = \begin{bmatrix}[cccccc:c]
1 & 1 & 1& \cdots & 1 & 1 & 1\\ 
e_1 & 0 & 0 &\cdots& 0& 0& 0\\
0 & e_2 & 0 &\cdots& 0& 0& 0\\
0 & 0 & e_3 &\cdots& 0 & 0 & 0\\
\vdots & \vdots & \vdots &\ddots & \vdots & \vdots &\vdots \\
0 & 0 & 0 & \cdots & e_{n-1} & 0& 0\\
0 & 0 & 0 & \cdots & 0 & e_{n}& 0
\end{bmatrix},$$}
where the last columns are present only if $b \ne 0$.

Let us suppose that conditions $(i)$ and $(ii)$ are true. Since $(i)$ implies that there is an element $m_{j,j} \in \Z_{q-1}^*$ such that $d_{j,j} = m_{j,j} e_j$, condition $(ii)$ becomes $\gcd (e_j m_{j,j}, q - 1) \mid d_{i,j}$. As $\gcd (m_{j,j}, q - 1) = 1$, that implies condition $(ii)$ is equivalent to $(e_j, q-1) \mid d_{i,j}$, which is in turn equivalent to the existence of $m_{i,j} \in \Z_{q-1}$ such that $d_{i,j} = m_{i,j} e_j$ over $\Z_{q-1}$. We can then use these values of $m_{i,j}$ to construct an invertible matrix $M$ of the form (\ref{matriz M}), such that when we multiply $M$ by $\tilde{D}_g$ gives us
\begin{equation*}
    M \tilde{D}_g = \begin{bmatrix}[cccccc:c]
1 & 1 & 1& \cdots & 1 & 1 & 1\\ 
m_{1,1} e_1 & m_{1,2} e_2 & m_{1,3} e_3&\cdots& m_{1,n-1} e_{n-1} & m_{1,n} e_n &0\\
0 & m_{2,2} e_2 & m_{2,3} e_3 &\cdots& m_{2,n-1} e_{n-1}& m_{2,n} e_n &0\\
0 & 0 & m_{3,3} e_3 &\cdots& m_{3,n-1} e_{n-1} & m_{3,n} e_n &0\\
\vdots & \vdots & \vdots &\ddots & \vdots & \vdots & \vdots\\
0 & 0 & 0 & \cdots & m_{n-1,n-1} e_{n-1}& m_{n-1,n} e_n &0\\
0 & 0 & 0 & \cdots & 0 & m_{n,n} e_n &0
\end{bmatrix},
\end{equation*}
that is equal to $\tilde{D}_f$.
\end{proof}
\vspace{10pt}

For the specific cases where the diagonal polynomials are linear or quadratic this criterion is simpler.
\begin{corollary}\label{corolario triangular diagonal equivalence}
Let $f$ be a triangular polynomial of the form (\ref{triangular superior genérico}) and
\begin{equation*}
    g(x_1, \dots, x_n) = a_1 x_1^{e} + \dots + a_n x_n^{e} -b,
\end{equation*}
be a diagonal polynomial, where $e \in \Z_{> 0}$. 
\begin{enumerate}[a)]
    \item If $e = 1$ and $\gcd (d_{j,j}, q - 1) = 1$ for all $1 \le j \le n$, then $f$ is totally $*$-equivalent to $g$.

    \item If $e = 2$, $q$ is odd, that there exists a $m_{j,j} \in \Z_{q-1}^*$ such that $d_{j,j} = 2m_{j,j}$ for all $1 \le j \le n$ and that $2 \mid d_{i,j}$ for all $1 \le i < j \le n$, then $f$ is $*$-equivalent to $g$.
\end{enumerate}
\end{corollary}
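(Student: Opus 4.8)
The plan is to deduce both parts directly from Theorem \ref{triangular diagonal equivalence}, applied with $e_1 = \cdots = e_n = e$, by checking that the two hypotheses (i) and (ii) of that theorem hold in each case. No new machinery is needed; the whole argument is a verification, so the work consists in unwinding the gcd conditions.

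For part a) I would take $e = 1$. Condition (i) asks for $m_{j,j} \in \Z_{q-1}^*$ with $d_{j,j} = m_{j,j} e_j = m_{j,j}$, so the natural choice is $m_{j,j} = d_{j,j}$, and the requirement that $m_{j,j}$ be a unit in $\Z_{q-1}$ is exactly the hypothesis $\gcd(d_{j,j}, q-1) = 1$. Condition (ii), $\gcd(d_{j,j}, q-1) \mid d_{i,j}$ for $1 \le i < j \le n$, is then automatic since $\gcd(d_{j,j}, q-1) = 1$ divides every integer. Both hypotheses of Theorem \ref{triangular diagonal equivalence} hold, so $f$ is totally $*$-equivalent to $g$.

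For part b) I would take $e = 2$ with $q$ odd. Here condition (i) is literally the assumed factorization $d_{j,j} = 2 m_{j,j}$ with $m_{j,j} \in \Z_{q-1}^*$. The one computation worth spelling out is the identity $\gcd(d_{j,j}, q-1) = 2$: since $q$ is odd, $q - 1$ is even, and since $m_{j,j}$ is coprime to $q-1$ we get $\gcd(2 m_{j,j}, q-1) = \gcd(2, q-1) = 2$. Hence condition (ii) collapses to $2 \mid d_{i,j}$ for all $1 \le i < j \le n$, which is the last hypothesis. Theorem \ref{triangular diagonal equivalence} then applies and gives that $f$ and $g$ are $*$-equivalent (indeed totally $*$-equivalent).

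I do not expect a real obstacle, since each part is just a specialization; the only step needing a little care is the gcd identity $\gcd(2 m_{j,j}, q-1) = 2$ in part b), where one must use both that $q - 1$ is even and that $m_{j,j}$ is a unit modulo $q-1$. It is worth noting in passing that the hypothesis ``$q$ odd'' is precisely what forces $2 \mid q - 1$; if $q$ were even then $2$ would be a unit in $\Z_{q-1}$ and the condition $d_{j,j} = 2 m_{j,j}$ would no longer be the right reformulation of Theorem \ref{triangular diagonal equivalence}(i).
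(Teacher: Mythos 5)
Your proposal is correct and follows essentially the same route as the paper: both parts are verified by specializing Theorem \ref{triangular diagonal equivalence} with $e_1=\cdots=e_n=e$, taking $m_{j,j}=d_{j,j}$ in part a) and using $\gcd(2m_{j,j},q-1)=\gcd(2,q-1)=2$ (from $q$ odd and $m_{j,j}$ a unit) in part b) to reduce condition (ii) to $2\mid d_{i,j}$. The paper's argument is the same verification, only stated more tersely.
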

\begin{proof}
    The statements in each item imply the conditions given in Theorem \ref{triangular diagonal equivalence}. In fact,
    \begin{enumerate}[a)]
        \item If $\gcd(d_{j,j}, q-1) = 1$, then condition $(ii)$ is always verified and $d_{j,j}$ is invertible, verifying condition $(i).$ 

        \item The statement that there is an invertible $m_{j,j}$ in $\Z_{q-1}$ such that $d_{j,j} = 2m_{j,j}$ for all $1 \le j \le n$ is, in this case, equivalent to condition $(i)$. Since $m_{j,j}$ is invertible, we have $\gcd(m_{j,j}, q - 1) = 1$, which implies that $\gcd (2, q - 1) = \gcd (2m_{j,j}, q - 1) = \gcd (d_{j,j}, q - 1)$. Considering $q$ odd, we have $\gcd (2, q - 1) = 2$, and the statement $2 \mid d_{i,j}$ for all $1 \le i < j \le n$ is equivalent to condition $(ii)$.
    \end{enumerate}
\end{proof}
We remark that two polynomials being $*$-equivalent does not mean that they have the same number of roots in $\F_q^n$. For instance, the polynomials $f(x,y,z) = 11x^{13} + 5 x^{21} y^{19} + 12 x^{2}y^{3}z^{17}$ and $g(x,y,z) = 11 x + 5 y + 12 z$  are $*$-equivalent in $\F_{31}[x,y,z]$, thus $N^*(f) = 870 = N^*(g)$. However, it can be verified that $N(f) = 1861 \ne 961 = N(g)$.

Let $f$ be a fully triangular polynomial. For any root $(c_1, c_2, \dots, c_n)$ of $f$, if $c_j = 0$ and $j$ is the smallest index that satisfies this condition, then $f(c_1, \dots, c_{j-1}, 0, c_{j+1}^\prime, \dots, c_n^\prime) = 0$ for any $c_{j+1}^\prime, \dots, c_n^\prime \in \F_q$. This is due to the fact that the terms involving the variables $x_{j+1}, \dots, x_n$ vanish, thereby not impacting the value of the polynomial. Thus, by adding over the indices of the first coordinates that are equal to $0$ among the roots, we derive the following identity:
\begin{equation}\label{Nf em termos de N*f}
    N(f) = \begin{cases}
    N^*(f) +\sum_{k = 1}^{n-1} N^*(f_k) q^{n - k - 1},& \text{if }b \ne 0,\\
    q^{n-1} + N^*(f) + \sum_{k = 1}^{n-1} N^*(f_k) q^{n - k - 1},& \text{if }b = 0.\\
    \end{cases}
\end{equation}
Now, let $f$ be totally $*$-equivalent to a polynomial $g$. From Lemma \ref{lema do caractere} we have that $N^*(f_k) = N^*(g_k)$ for all $1 \le k \le n$. Thus,
\begin{equation}\label{Nf em termos de N*g}
    N(f) = \begin{cases}
    N^*(g) +\sum_{k = 1}^{n-1} N^*(g_k) q^{n - k - 1},& \text{if }b \ne 0,\\
    q^{n-1} + N^*(g) + \sum_{k = 1}^{n-1} N^*(g_k) q^{n - k - 1},& \text{if }b = 0.
    \end{cases}
\end{equation}
Therefore, if we know that $f$ is totally $*$-equivalent to a polynomial $g$, and $N^*(g_k)$ is known for any $k$, we can substitute these values into (\ref{Nf em termos de N*g}) to compute $N(f)$. 

For instance, let us consider the polynomials $f,g \in \F_{10007}[x,y]$ given by $f(x,y,z) = x^{1001} + x^{2001}y^{3001} + x^{4001}y^{5001}z^{6001} + 7001$ and $g(x,y,z) = x + y + z + 7001$. By straightforward calculation, we can verify that $f$ is totally $*$-equivalent to $g$, so
\begin{align*}
    N(f) &= N^*(x + y + z + 7001) + N^*(x + y + 7001) + q N^*(x + 7001)\\
    &= 100110031 + 10005 + 10007 \cdot 1\\
    &= 100130043.
\end{align*}

\subsection{Roots with non-zero coordinates for diagonal polynomials}

Let $g$ be the linear polynomial given by
\begin{equation}\label{definição de g linear}
    g(x_1, \dots, x_n) = a_1 x_1 + \dots + a_n x_n -b,
\end{equation}
where $a_1, \dots, a_n \in \F_q^*$ and $b \in \F_q$. In this case the exact value of $N^*(g_k)$ is known, which can be substituted in (\ref{Nf em termos de N*g}) to compute $N(f)$ for any $f$ polynomial $*$-equivalent to $g$.

\begin{lemma}\label{N*(h_k;k) exato}
For a linear polynomial $g$ of the form (\ref{definição de g linear}), the number of roots in $(\F_q^*)^k$ of $g_k$ is
\begin{equation}\label{equação N*(h_k;k) linear}
N^*(g_k) = \begin{cases}
\frac{(q - 1)^k}{q} - \frac{(-1)^k}{q},& \text{if }b \ne 0,\\
\frac{(q - 1)^k}{q} - \frac{(-1)^k}{q} + (-1)^k,& \text{if }b = 0.\\
\end{cases}
\end{equation}

\end{lemma}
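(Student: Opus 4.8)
The plan is to apply Lemma~\ref{lema do caractere} directly to $g_k$, treating the cases $b\neq 0$ and $b=0$ separately. When $b\neq 0$ the polynomial $g_k$ has $m=k+1$ monomials, namely the terms $a_1x_1,\dots,a_kx_k$ together with the constant $-b$; its augmented degree matrix $\tilde{D}_{g_k}$ is then the $(k+1)\times(k+1)$ matrix whose first row is $(1,1,\dots,1)$, whose next $k$ rows record the exponent of $x_i$ in each monomial and therefore form the $k\times k$ identity followed by a zero column, and whose last column is $(1,0,\dots,0)^T$. When $b=0$ we get instead the $(k+1)\times k$ matrix obtained by deleting that last column. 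In the notation of Lemma~\ref{lema do caractere} we have $n=k$ throughout.

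The key observation is that in either case the only vector $v=(v_1,\dots,v_m)$ with $0\le v_i\le q-2$ solving $\tilde{D}_{g_k}v^T\equiv 0\pmod{q-1}$ is $v=0$: the $k$ rows coming from the variables form an identity block, forcing $v_1=\dots=v_k=0$, after which the all-ones first row forces the remaining coordinate (present only when $b\neq 0$) to vanish as well. Hence the sum in Lemma~\ref{lema do caractere} collapses to the single term indexed by $v=0$, which equals $\prod_{j=1}^{m}\omega(a_j)^{0}G(0)=G(0)^m$. At this point I would record the standard evaluation $G(0)=\sum_{a\in\F_q^*}\delta_p^{Tr(a)}=-1$, which follows from $\sum_{a\in\F_q}\delta_p^{Tr(a)}=\tfrac{q}{p}\sum_{t\in\F_p}\delta_p^{t}=0$ together with $\delta_p^{Tr(0)}=1$. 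Thus the single surviving term is $(-1)^m$.

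It then remains to substitute into Lemma~\ref{lema do caractere} and simplify. For $b\neq 0$ we have $m=k+1$, so $n+1-m=0$ and the prefactor $(q-1)^{n+1-m}/q$ equals $1/q$, giving $N^*(g_k)=\frac{(q-1)^k}{q}+\frac{1}{q}(-1)^{k+1}=\frac{(q-1)^k}{q}-\frac{(-1)^k}{q}$. For $b=0$ we have $m=k$, so $n+1-m=1$ and the prefactor equals $(q-1)/q$, giving $N^*(g_k)=\frac{(q-1)^k}{q}+\frac{q-1}{q}(-1)^{k}$; expanding $\frac{q-1}{q}(-1)^k=(-1)^k-\frac{(-1)^k}{q}$ yields exactly the second branch of~(\ref{equação N*(h_k;k) linear}).

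There is no genuinely hard step here; the only thing to watch is the bookkeeping of $m$ (hence of the prefactor $(q-1)^{n+1-m}/q$) in the two cases and not overlooking the constant monomial when $b\neq0$. As an independent check, or an alternative route avoiding characters: since $a_i\neq0$, the affine equation $a_1x_1+\dots+a_kx_k=b$ has exactly $q^{k-1}$ solutions in $\F_q^k$ for $k\ge 1$; sorting these solutions by which coordinates are zero gives $q^{k-1}=\sum_{j=0}^{k}\binom{k}{j}N^*(g_j)$, where $N^*(g_0)=1$ if $b=0$ and $N^*(g_0)=0$ otherwise, and binomial inversion then produces the same closed form.
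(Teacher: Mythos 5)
Your argument is correct, but it is worth noting that the paper does not actually prove this lemma at all: it simply cites Lemma 2 of Cao (2010). What you have done is give a self-contained derivation inside the paper's own framework: you apply Lemma~\ref{lema do caractere} to $g_k$, observe that the identity block in the variable rows of $\tilde{D}_{g_k}$ forces $v_1=\dots=v_k=0$ (and, when $b\neq0$, the all-ones row then kills the last coordinate too), so the character sum collapses to the single term $G(0)^m=(-1)^m$, and your bookkeeping of $m$ and of the prefactor $(q-1)^{n+1-m}/q$ in the two cases is exactly right, yielding both branches of (\ref{equação N*(h_k;k) linear}). Your alternative check --- counting the $q^{k-1}$ affine solutions of $a_1x_1+\dots+a_kx_k=b$, sorting by the set of zero coordinates (legitimate because after the substitution $y_i=a_ix_i$ the count depends only on how many coordinates are nonzero), and applying binomial inversion --- is also valid, and in fact mirrors the inversion technique (Lemma~\ref{inversão}) that the paper itself deploys later for the quadratic diagonal case. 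So relative to the paper you gain self-containedness at essentially no cost; the only thing the citation buys is brevity.
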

\begin{proof}
See Lemma 2 in \cite{cao 2010}.
\end{proof}

In the case when $f$ is totally $*$-equivalent to a quadratic diagonal polynomial
\begin{equation}\label{definição de g}
    g(x_1, \dots, x_n) = a_1 x_1^2 + \dots + a_n x_n^2 - b,
\end{equation}
where $a_1, \dots, a_n \in \F_q^*$ and $b \in \F_q$, we will need a way to compute the number of roots of $g_k$, $1 \le k \le n$ in $\F_q^*$. The following result about quadratic forms is classic.

\begin{theorem}\label{teorema soluções quadraticas}
Let $\F_q$ be a finite field, where $q$ is  odd, and $g$ a polynomial as in (\ref{definição de g}). The number of roots of $g(x_1, \dots, x_n)$ in $\F_q^n$ is

\begin{equation}\label{soluções quadráticas}
N(g) = \begin{cases} 
q^{n-1} - \eta((-1)^{n/2} a_1 \cdots a_n) q^{(n-2)/2},& \text{ if } $n$ \text{ even and } b \ne 0,\\
q^{n-1} + \eta((-1)^{(n-1)/2} b a_1 \cdots a_n) q^{(n-1)/2},& \text{ if } $n$ \text{ odd and } b \ne 0,\\
q^{n-1} + \eta((-1)^{n/2} a_1 \cdots a_n)(q^{n/2} - q^{(n-2)/2}),& \text{ if } $n$ \text{ even and } b = 0,\\
q^{n-1},& \text{ if } $n$ \text{ odd and } b = 0,
\end{cases}
\end{equation}
where $\eta$ is the quadratic multiplicative character in $\F_q$.
\end{theorem}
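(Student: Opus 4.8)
The plan is to run the standard additive-character argument. Fix a nontrivial additive character $\psi$ of $\F_q$ and adopt the convention $\eta(0)=0$. By orthogonality of additive characters,
\[
N(g)=\frac{1}{q}\sum_{y\in\F_q}\sum_{x\in\F_q^n}\psi\!\left(y\bigl(a_1x_1^2+\cdots+a_nx_n^2-b\bigr)\right).
\]
The term $y=0$ contributes $q^{n-1}$, and for $y\neq 0$ the inner sum factors as $\psi(-yb)\prod_{j=1}^n S(ya_j)$, where $S(c)=\sum_{x\in\F_q}\psi(cx^2)$.

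Next I would evaluate $S(c)$ for $c\neq 0$. Since $x^2=u$ has exactly $1+\eta(u)$ solutions in $\F_q$, we get $S(c)=\sum_{u\in\F_q}(1+\eta(u))\psi(cu)=\sum_{u}\eta(u)\psi(cu)$; after the substitution $u\mapsto c^{-1}u$ and using $\eta^2=1$ this equals $\eta(c)\,G$, where $G=\sum_{u\in\F_q^*}\eta(u)\psi(u)$ is the quadratic Gauss sum. Hence $\prod_{j=1}^n S(ya_j)=\eta(y)^n\,\eta(a_1\cdots a_n)\,G^n$, and
\[
N(g)=q^{n-1}+\frac{\eta(a_1\cdots a_n)\,G^n}{q}\sum_{y\in\F_q^*}\eta(y)^n\,\psi(-yb).
\]
The one genuinely non-elementary ingredient is the classical identity $G^2=\eta(-1)q$, which I would either cite or obtain from $|G|^2=q$ together with the standard sign computation; everything else is bookkeeping with character orthogonality.

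With this in hand I would split into the four cases. If $n$ is even, then $\eta(y)^n=1$, so $G^n=(\eta(-1)q)^{n/2}=\eta((-1)^{n/2})q^{n/2}$, while $\sum_{y\neq 0}\psi(-yb)$ equals $-1$ if $b\neq 0$ and $q-1$ if $b=0$; substituting gives the first and third cases. If $n$ is odd, then $\eta(y)^n=\eta(y)$, so $G^n=\eta((-1)^{(n-1)/2})q^{(n-1)/2}G$; when $b=0$ the sum $\sum_{y\neq 0}\eta(y)$ vanishes, giving $q^{n-1}$, and when $b\neq 0$ the substitution $y\mapsto -b^{-1}y$ turns the remaining sum into $\eta(-b)G$, so the correction term becomes $\tfrac{1}{q}\eta(a_1\cdots a_n)\eta((-1)^{(n-1)/2})q^{(n-1)/2}\eta(-b)G^2$; using $G^2=\eta(-1)q$ and $\eta(-1)\eta(-b)=\eta(b)$ collapses this to $\eta((-1)^{(n-1)/2}b\,a_1\cdots a_n)q^{(n-1)/2}$, which is the second case.

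I expect the main obstacle to be purely organizational—keeping the various factors $\eta(\pm 1)$ and powers of $q$ consistent across the four cases—rather than conceptual, the substantive content being the single identity $G^2=\eta(-1)q$. An alternative route avoiding Gauss sums entirely is induction on $n$ in steps of two, starting from the known solution count of $a_1x_1^2+a_2x_2^2=c$; I would nonetheless keep the character-sum argument as the primary proof since it handles all cases uniformly.
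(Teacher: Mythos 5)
Your argument is correct, and every step checks out: the orthogonality decomposition, the evaluation $S(c)=\eta(c)G$, the reduction of $\prod_j S(ya_j)$ to $\eta(y)^n\eta(a_1\cdots a_n)G^n$, and the four-case bookkeeping using $G^2=\eta(-1)q$ all reproduce exactly the formulas in the statement (I verified the odd, $b\neq 0$ case in detail, where the factors $\eta(-b)\eta(-1)=\eta(b)$ collapse as you claim). The paper itself does not prove this theorem: it simply cites Theorem 10.5.1 of Berndt--Evans--Williams, \emph{Gauss and Jacobi Sums}, so there is no internal proof to compare against. What you have written is the standard self-contained character-sum proof of that classical result, essentially the same argument found in the cited reference (or in Lidl--Niederreiter, Theorems 6.26--6.27). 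The only external input you rely on is $G^2=\eta(-1)q$, which you correctly flag as the one non-elementary ingredient; citing it is perfectly acceptable, though if you want full self-containment you should include the short computation $G\overline{G}=q$ together with $\overline{G}=\eta(-1)G$. Your alternative suggestion of induction in steps of two would also work but is less uniform, as you note.
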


\begin{proof}
See Theorem 10.5.1 in \cite{gauss jacobi}.
\end{proof}

We notice from this result that the number of roots only depends on the values of $\eta(a_j)$ for $1 \le j \le n$ and $\eta(b)$. 

We also remark that Theorem \ref{soluções quadráticas} let us calculate $N(g_k)$ for any $k$. We will need the following definitions and results in order to  calculate $N^*(g_k)$ for any $k$.

\begin{definition}\label{r(k) e s(k)}
Let $\eta$ be the quadratic character in $\F_q$. For a coefficient vector $(a_1, a_2, \dots, a_n) \in (\F_q^*)^n$ let us define the following functions:
    {\small
\begin{align*}
    r(k) = \# \{1\le j \le k: \eta(a_j) = 1\},\quad
    s(k) = \# \{1\le j \le k: \eta(a_j) = -1\},\quad 1 \le k \le n.
\end{align*}}For simplicity, let us denote $r= r(n),\, s=s(n)$.
\end{definition}

%Let $g_{i,j}$ be a quadratic diagonal polynomial of the form
%$$g_{i,j} = a_{u_1} x_{u_1}^2 + \cdots + a_{u_{i+j}} x_{u_{i+j}}^2 - b,$$
%where $\{u_1, \dots, u_{i+j}\} \subset \{1, \dots, n\}$, $i$ is the number of coefficients $a_{u_1}, \dots, a_{u_{i+j}}$ that are squares and $j$ is the number of coefficients that are not squares. Let $N(g_{i,j})$ and $N^*(g_{i,j})$ be the number of roots of $g_{i,j}$ in $\F_q^{i+j}$ and $(\F_q^*)^{i+j}$, respectively. These numbers do not depend on the coefficients $a_{u_1}, \dots, a_{u_{i+j}}$ and only depend on the number $i$ and $j$. From Theorem \ref{teorema soluções quadraticas}, and the fact that $\eta(-1) = (-1)^{(q-1)/2}$, we obtain that
%{\small
%\begin{equation}\label{N(g_ij;i,j) valor explícito}
% N(g_{i,j}) = \begin{cases} 
% q^{i+j-1} - (-1)^j (-1)^{(q-1)(i+j)/4} q^{(i + j-2)/2},& \text{ if } i+j \text{ even and } b \ne 0,\\
% q^{i+j-1} + (-1)^j (-1)^{(q-1)(i+j - 1)/4} \eta(b) q^{(i+j-1)/2},& \text{ if } i+j \text{ odd and } b \ne 0,\\
% q^{i+j-1} + (-1)^j (-1)^{(q-1)(i+j)/4}(q^{(i+j)/2} - q^{(i+j-2)/2}), & \text{ if } i+j \text{ even and } b = 0,\\
% q^{i+j-1}, & \text{ if } i+j \text{ odd and } b = 0.
% \end{cases}
% \end{equation}}

% Grouping the roots of $g_k$ in classes, fixing the number $i$ (respectively $j$) of the non-zero coordinates of the root whose corresponding coefficients are squares (respectively non-squares), we obtain
Let us partition the set of roots of $g_k$ in classes $A_{i,j}$ fixing the number $i$ (respectively $j$) of non-zero coordinates of the roots whose corresponding coefficients are squares (respectively non-squares). For any root in $A_{i,j}$, let $\{u_1, \dots, u_{i+j}\}$ be the indices of the $i+j$ non-zero coordinates. Then the non-zero coordinates, arranged in the same order, form a root in $(\F_q^*)^{i+j}$ of the polynomial
$$g_{u_1, \dots, u_{i+j}} = a_{u_1} x_{u_1}^2 + \cdots + a_{u_{i+j}} x_{u_{i+j}}^2 - b.$$

Let $g_{u_1^\prime, \dots, u_{i+j}^\prime}$ be any other polynomial of the same form with the same numbers $i$ and $j$ of square and non-square coefficients. It is easy to construct a bijection between the roots of $g_{u_1, \dots, u_{i+j}}$ and $g_{u_1^\prime, \dots, u_{i+j}^\prime}$ in $\F_q^{i+j}$, and also in $(\F_q^*)^{i+j}$. Thus, $N(g_{u_1, \dots, u_{i+j}})$ and $N^*(g_{u_1, \dots, u_{i+j}})$ depend only on $i$ and $j$. Since the number of roots is the only information that matters to us, we will denote any such polynomial simply by $g_{i,j}$ and the quantities as $N(g_{i,j})$ and $N^*(g_{i,j})$. Thus, the number of roots in each class $A_{i,j}$ is $\binom{r(k)}{i} \binom{s(k)}{j} N^*(g_{i,j})$, and the total number of roots of $g_k$ is

\begin{equation}\label{N em termos de N*}
    N(g_{r(k),s(k)}) = N(g_k) = \sum_{\substack{0 \le i \le r(k) \\ 0 \le j \le s(k)}} \binom{r(k)}{i} \binom{s(k)}{j} N^*(g_{i,j}).
\end{equation}
%Remark that the meaning of $N^*(g_{00}; 0, 0)$, which appears in a summand in the above expression, is not obvious, it is $1$ when $b = 0$ and $0$ when $b = 1$. Similarly, $N^*(g_{r(k)s(k)}; r(k), s(k))$ is simply $N^*(g_k)$.

The following Binomial Inversion Lemma will allow us to obtain an expression of $N^*(g_k)$ in terms of $N(g_{i,j})$.
\begin{lemma}\label{inversão}
Let $G$ be an abelian group and $f:\Z_{\ge 0} \rightarrow G$ a function. Let $F$ be the function defined by $F(r) = \sum_{i=0}^r \binom{ r}{i} f(i)$,
then $f$ can be written in terms of $F$ as
$$f(r) = \sum_{i=0}^r (-1)^{r+i} \binom{r}{i} F(i).$$
\end{lemma}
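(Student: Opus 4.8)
The plan is to substitute the definition of $F$ into the proposed formula, interchange the (finite) order of summation, and collapse the inner sum using the standard subset-of-a-subset identity together with the binomial theorem. Since all sums are finite and the coefficients are integers, everything makes sense in the abelian group $G$ with no convergence issues, so the argument is purely formal manipulation.

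First I would write, for a fixed $r \ge 0$,
\[
\sum_{i=0}^r (-1)^{r+i}\binom{r}{i}F(i)
= \sum_{i=0}^r (-1)^{r+i}\binom{r}{i}\sum_{j=0}^i \binom{i}{j} f(j)
= \sum_{j=0}^r f(j) \sum_{i=j}^r (-1)^{r+i}\binom{r}{i}\binom{i}{j},
\]
where in the last step I reordered the double sum over the index set $\{(i,j): 0\le j\le i\le r\}$. Next I would apply the identity $\binom{r}{i}\binom{i}{j} = \binom{r}{j}\binom{r-j}{i-j}$ and reindex the inner sum with $k = i-j$, obtaining
\[
\sum_{i=j}^r (-1)^{r+i}\binom{r}{i}\binom{i}{j}
= \binom{r}{j}(-1)^{r-j}\sum_{k=0}^{r-j} (-1)^{k}\binom{r-j}{k}
= \binom{r}{j}(-1)^{r-j}(1-1)^{r-j}.
\]
This last quantity equals $0$ when $j < r$ and equals $1$ when $j = r$, i.e. it is the Kronecker delta $[\,j = r\,]$. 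Substituting back, the outer sum over $j$ collapses to the single term $f(r)$, which is exactly the claimed identity.

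I do not expect any real obstacle here: the only points requiring a word of care are the validity of reordering the finite double sum (immediate, as the index set is finite) and the fact that the integer coefficients $(-1)^{r+i}\binom{r}{i}$ act on the group-valued function $f$ in the usual way, so the vanishing of the integer inner sum forces the corresponding group element to be $0$. If desired, the combinatorial identity $\binom{r}{i}\binom{i}{j} = \binom{r}{j}\binom{r-j}{i-j}$ can itself be checked directly from factorials, and the vanishing of $\sum_{k=0}^{m}(-1)^k\binom{m}{k}$ for $m \ge 1$ is the binomial expansion of $(1-1)^m$.
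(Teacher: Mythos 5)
Your proof is correct. Note that the paper does not actually prove this lemma at all; it simply cites Section 5.3 of \emph{Concrete Mathematics}, so there is no internal argument to compare against. Your direct verification --- substituting the definition of $F$, exchanging the finite double sum, applying $\binom{r}{i}\binom{i}{j}=\binom{r}{j}\binom{r-j}{i-j}$, and collapsing via $\sum_{k=0}^{m}(-1)^k\binom{m}{k}=0$ for $m\ge 1$ --- is the standard textbook proof of binomial inversion, and you correctly observe that the argument is purely formal: all coefficients are integers acting on the abelian group $G$ by repeated addition, so the vanishing of the integer inner sum kills the corresponding group term and the only surviving contribution is $j=r$, giving $f(r)$. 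This self-contained argument is a perfectly acceptable (indeed more complete) substitute for the paper's citation.
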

\begin{proof}
See Section 5.3 in \cite{concrete mathematics}.
\end{proof}

Using Lemma \ref{inversão} twice in (\ref{N em termos de N*}), it follows that

\begin{equation}\label{N*g_krs em termos de Ngij}
    N^*(g_{r(k),s(k)}) = N^*(g_k) = \sum_{i=0}^{r(k)} \sum_{j=0}^{s(k)} (-1)^{r(k)+s(k)+i+j} \binom{r(k)}{i} \binom{s(k)}{j} N(g_{i,j}).
\end{equation}
From Theorem \ref{teorema soluções quadraticas}, and the fact that $\eta(-1) = (-1)^{(q-1)/2}$, we obtain that
{\small
\begin{equation}\label{N(g_ij;i,j) valor explícito}
N(g_{i,j}) = \begin{cases} 
q^{i+j-1} - (-1)^j (-1)^{(q-1)(i+j)/4} q^{(i + j-2)/2},& \text{ if } i+j \text{ even and } b \ne 0,\\
q^{i+j-1} + (-1)^j (-1)^{(q-1)(i+j - 1)/4} \eta(b) q^{(i+j-1)/2},& \text{ if } i+j \text{ odd and } b \ne 0,\\
q^{i+j-1} + (-1)^j (-1)^{(q-1)(i+j)/4}(q^{(i+j)/2} - q^{(i+j-2)/2}), & \text{ if } i+j \text{ even and } b = 0,\\
q^{i+j-1}, & \text{ if } i+j \text{ odd and } b = 0.
\end{cases}
\end{equation}}
By expressing $N^*(g_k)$ in terms of $N(g_{i,j})$, we can use (\ref{N(g_ij;i,j) valor explícito}) to get an explicit value for $N^*(g_k)$, which can be used in (\ref{Nf em termos de N*g}) to determine $N(f)$.

\begin{theorem}\label{soluções não-zero quadraticas}
Let $\F_q$ be a finite field with $q$ an odd number, $g$ a quadratic diagonal polynomial as in (\ref{definição de g}), and $r(k),\,s(k)$ be as in Definition \ref{r(k) e s(k)}. Let us define the complex constants
\begin{align*}
\zeta_1 = (q(-1)^{(q -1)/2})^{1/2} - 1,\quad
\zeta_2 = -(q(-1)^{(q -1)/2})^{1/2} - 1.
\end{align*}
We have that
\begin{enumerate}[a)]
    \item if $b \ne 0$, then
    \begin{equation}\label{equação N*(h_k;k) quadratica b ne 0}
    \begin{split}
    N^*(g_k) =&\, \frac{(q - 1)^{k}}{q}
    - \frac{1}{2q}  ( \zeta_1^{r(k)} \zeta_2^{s(k)} + \zeta_2^{r(k)} \zeta_1^{s(k)}) \\
    &+ \frac{ \eta(b)}{2(q (-1)^{(q-1)/2} )^{1/2}}  ( \zeta_1^{r(k)} \zeta_2^{s(k)} - \zeta_2^{r(k)} \zeta_1^{s(k)}).
\end{split}
\end{equation}

    \item if $b = 0$, then
    \begin{equation}\label{equação N*(h_k;k) quadratica b = 0}
    \begin{split}
    N^*(g_k) =&\, \frac{(q - 1)^{k}}{q}
    - \frac{q - 1}{2q}  ( \zeta_1^{r(k)} \zeta_2^{s(k)} + \zeta_2^{r(k)} \zeta_1^{s(k)}).
\end{split}
\end{equation}
\end{enumerate}

\end{theorem}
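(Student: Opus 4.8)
The plan is to start from the expansion (\ref{N*g_krs em termos de Ngij}), which already writes $N^*(g_k)$ as an explicit double binomial sum in the quantities $N(g_{i,j})$, and substitute the closed forms (\ref{N(g_ij;i,j) valor explícito}). Writing $r=r(k)$, $s=s(k)$ for brevity and using that $r+s=k$ (each of the first $k$ coefficients is either a square or a non-square), I would split $N(g_{i,j})$ into its ``main'' summand $q^{i+j-1}$, present in all four cases, and a ``correction'' summand depending on the parity of $i+j$ and on whether $b=0$. For the main summand, interchanging the order of summation and factoring, its contribution to $N^*(g_k)$ is
\[
\frac{(-1)^{r+s}}{q}\Bigl(\sum_{i=0}^{r}\binom{r}{i}(-q)^{i}\Bigr)\Bigl(\sum_{j=0}^{s}\binom{s}{j}(-q)^{j}\Bigr)=\frac{(-1)^{r+s}(1-q)^{r+s}}{q}=\frac{(q-1)^{k}}{q},
\]
which accounts for the leading term in both (\ref{equação N*(h_k;k) quadratica b ne 0}) and (\ref{equação N*(h_k;k) quadratica b = 0}).

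For the correction summand I would introduce $\tau:=\bigl(q(-1)^{(q-1)/2}\bigr)^{1/2}$, so that $\zeta_1=\tau-1$ and $\zeta_2=-\tau-1$, and note the identities $(-1)^{(q-1)(i+j)/4}q^{(i+j)/2}=\tau^{i+j}$ when $i+j$ is even and $(-1)^{(q-1)(i+j-1)/4}q^{(i+j-1)/2}=\tau^{i+j-1}$ when $i+j$ is odd. With these, the correction part of $N(g_{i,j})$ in each case equals $(-1)^{j}\tau^{i+j}$ times a scalar that is constant on each parity class: for $b\ne 0$ the scalar is $-1/q$ when $i+j$ is even and $\eta(b)/\tau$ when $i+j$ is odd; for $b=0$ it is $(q-1)/q$ when $i+j$ is even and $0$ when $i+j$ is odd.

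Next I would replace the restriction to a parity class by $\tfrac12\bigl(1\pm(-1)^{i+j}\bigr)$. This turns the correction contribution into a sum of terms of the form $(\text{scalar})\cdot(-1)^{r+s}\sum_{i,j}\binom{r}{i}\binom{s}{j}(\pm\tau)^{i}(\mp\tau)^{j}$, each of which factors as $(1\pm\tau)^{r}(1\mp\tau)^{s}$; absorbing $(-1)^{r+s}=(-1)^{r}(-1)^{s}$ onto the two factors and using $-(1-\tau)=\zeta_1$, $-(1+\tau)=\zeta_2$ rewrites each such term as $\zeta_1^{r}\zeta_2^{s}$ or $\zeta_2^{r}\zeta_1^{s}$. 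Collecting the resulting coefficients of $\zeta_1^{r}\zeta_2^{s}$ and $\zeta_2^{r}\zeta_1^{s}$ in the $b\ne 0$ case produces the combinations $-\tfrac1{2q}\pm\tfrac{\eta(b)}{2\tau}$, which after recalling $\tau=(q(-1)^{(q-1)/2})^{1/2}$ is precisely (\ref{equação N*(h_k;k) quadratica b ne 0}); repeating the same computation with $b=0$, where only the even-parity scalar $(q-1)/q$ survives, yields (\ref{equação N*(h_k;k) quadratica b = 0}).

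The genuinely routine parts are the binomial identities $\sum_{i}\binom{r}{i}x^{i}=(1+x)^{r}$ and the arithmetic of $\zeta_1,\zeta_2$. The main obstacle is purely bookkeeping: after expanding the parity indicator one must keep straight which of $\pm\tau$ is attached to the $i$-sum and which to the $j$-sum, and distribute the global sign $(-1)^{r+s}$ correctly so that $(1\mp\tau)^{r}(1\pm\tau)^{s}$ becomes the intended product of powers of $\zeta_1$ and $\zeta_2$; a sign slip here swaps $\zeta_1^{r}\zeta_2^{s}$ with $\zeta_2^{r}\zeta_1^{s}$ or flips the coefficient of the symmetric combination. It is also worth checking the boundary term $i=j=0$ (corresponding to $g_{0,0}=-b$, with $N(g_{0,0})=0$ if $b\ne 0$ and $N(g_{0,0})=1$ if $b=0$) against (\ref{N(g_ij;i,j) valor explícito}) to confirm that the closed forms are applied consistently at the edge of the summation range.
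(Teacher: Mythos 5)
Your approach is the same as the paper's: start from the inversion formula (\ref{N*g_krs em termos de Ngij}), substitute the explicit values (\ref{N(g_ij;i,j) valor explícito}) written with the parity indicators $\tfrac12\bigl(1\pm(-1)^{i+j}\bigr)$, and factor the resulting double sums via the binomial theorem into powers of $\zeta_1=\tau-1$ and $\zeta_2=-\tau-1$ with $\tau=(q(-1)^{(q-1)/2})^{1/2}$; for part a) your bookkeeping is correct and this is precisely the paper's (terse) computation, including the edge term $i=j=0$. One caveat on part b): carried out as you describe, the even-parity scalar $(q-1)/q$ produces the contribution $+\frac{q-1}{2q}\bigl(\zeta_1^{r(k)}\zeta_2^{s(k)}+\zeta_2^{r(k)}\zeta_1^{s(k)}\bigr)$, not the $-\frac{q-1}{2q}\bigl(\cdots\bigr)$ displayed in (\ref{equação N*(h_k;k) quadratica b = 0}), so your closing claim that the same computation ``yields'' the stated $b=0$ formula is not literally true. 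The plus sign is in fact the correct one: for $q=3$, $k=1$, $g_1=x_1^2$ one has $N^*(g_1)=0$, which the displayed formula fails (it gives $4/3$) while the plus-sign version gives $0$; the plus sign is also the one used downstream in Theorem \ref{resultado principal versão quadratica}, part b). So your derivation is sound, but you should state explicitly that it establishes the $b=0$ formula with the opposite sign, i.e.\ the sign in the displayed statement is a typo rather than something your computation reproduces.
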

\begin{proof}
    Firstly, we remark that both constants $\zeta_1$ and $\zeta_2$ are related to the values of geometric sums. For any positive integer $u$, we have
\begin{equation*}
    \zeta_1^u = (-1)^u \sum_{i=0}^{u} \binom{u}{i} (-1)^i (q(-1)^{(q-1)/2})^{i/2},\quad
    \zeta_2^u = (-1)^u \sum_{i=0}^{u} \binom{u}{i} (q(-1)^{(q-1)/2})^{i/2}.
\end{equation*}

    We will now establish the proof for the scenario when $b \neq 0$ and the analogous case can be reasoned in a similar fashion. Since $b \ne 0$, (\ref{N(g_ij;i,j) valor explícito}) reduces to
\begin{equation*}
    N(g_{i,j}) = 
    \begin{cases} 
    q^{i+j-1} - (-1)^j (-1)^{(q-1)(i+j)/4} q^{(i + j-2)/2},& \text{ if } i+j \text{ even},\\
    q^{i+j-1} + (-1)^j (-1)^{(q-1)(i+j-1)/4} \eta(b) q^{(i+j-1)/2},& \text{ if } i+j \text{ odd},
    \end{cases}
\end{equation*}
which can be rewritten as
\begin{align*}
    N(g_{i,j}) =&\, q^{i+j-1}
    +\frac{(1 + (-1)^{i+j})}{2}\left( - (-1)^j (-1)^{(q-1)(i+j)/4} q^{(i + j-2)/2} \right)\\
    &+\frac{(1 - (-1)^{i+j})}{2}\left( (-1)^j (-1)^{(q-1)(i+j-1)/4} \eta(b) q^{(i+j-1)/2} \right).
\end{align*}
This can be used in (\ref{N*g_krs em termos de Ngij}) to obtain an equation with a right-hand side that can be partitioned into geometric sums, through a straightforward computation, yields our result.
\end{proof}

\section{Main results}
We are going to determine the number of roots of fully triangular polynomials in some cases, starting by the simplest case, when it is totally $*$-equivalent to a linear polynomial. 
\begin{theorem}\label{resultado principal versão linear}
Let $f$ be a fully triangular polynomial as in (\ref{triangular superior genérico}) that is totally $*$-equivalent to a linear polynomial $g$ of form (\ref{definição de g linear}). Then,

\begin{equation*}
    N(f) =\begin{cases}
    
     \frac{q^n - (-1)^n}{q + 1}, & \text{if } b \ne 0,\\
     \frac{2q^n + (-1)^n(q-1)}{q+1}, &\text{if } b = 0.
    \end{cases}
\end{equation*}
\end{theorem}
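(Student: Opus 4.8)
The plan is to combine the reduction formula (\ref{Nf em termos de N*g}) with the explicit values supplied by Lemma \ref{N*(h_k;k) exato}; no new idea is required beyond summing two geometric series. Since $f$ is assumed totally $*$-equivalent to the linear polynomial $g$, we have $N^*(f_k) = N^*(g_k)$ for every $1 \le k \le n$, so (\ref{Nf em termos de N*g}) already expresses $N(f)$ entirely in terms of the $N^*(g_k)$. Substituting (\ref{equação N*(h_k;k) linear}) then leaves a closed finite sum in $q$ and $n$, which I will evaluate directly.

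Concretely, for $b \ne 0$ write $N^*(g_k) = \frac{1}{q}\bigl[(q-1)^k - (-1)^k\bigr]$, so that
\[
N(f) = \frac{(q-1)^n - (-1)^n}{q} + \frac{1}{q}\sum_{k=1}^{n-1}\bigl[(q-1)^k - (-1)^k\bigr] q^{\,n-1-k}.
\]
The sum splits into $\sum_{k=1}^{n-1}(q-1)^k q^{\,n-1-k}$ and $\sum_{k=1}^{n-1}(-1)^k q^{\,n-1-k}$; factoring out $q^{\,n-1}$ turns each into a geometric progression of ratio $\frac{q-1}{q}$ and $\frac{-1}{q}$ respectively, so the identity $\sum_{k=1}^{n-1} t^k = \frac{t - t^n}{1-t}$ evaluates them as $(q-1)q^{\,n-1} - (q-1)^n$ and $\frac{-q^{\,n-1} - (-1)^n}{q+1}$. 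Inserting these, the $(q-1)^n$ terms cancel, and after clearing the common denominator $q(q+1)$ the numerator collapses to $q\bigl(q^n - (-1)^n\bigr)$, giving $N(f) = \frac{q^n - (-1)^n}{q+1}$.

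For $b = 0$ the formula (\ref{Nf em termos de N*g}) carries an extra summand $q^{\,n-1}$, and (\ref{equação N*(h_k;k) linear}) replaces each $N^*(g_k)$ by $N^*(g_k) + (-1)^k$; hence $N(f)$ picks up exactly $q^{\,n-1} + (-1)^n + \sum_{k=1}^{n-1}(-1)^k q^{\,n-1-k}$ relative to the $b \ne 0$ value. Using the value of $\sum_{k=1}^{n-1}(-1)^k q^{\,n-1-k}$ already computed and simplifying over $q+1$ yields $\frac{2q^n + (-1)^n(q-1)}{q+1}$. The only thing to watch is the bookkeeping of the $\frac{(-1)^k}{q}$ and $(-1)^k$ contributions and keeping everything over the denominator $q(q+1)$; there is no genuine obstacle here, since the whole argument is a finite computation once Lemma \ref{N*(h_k;k) exato} and (\ref{Nf em termos de N*g}) are in hand.
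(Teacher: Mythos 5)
Your proposal is correct and follows essentially the same route as the paper: substitute the explicit values of $N^*(g_k)$ from Lemma \ref{N*(h_k;k) exato} into the reduction formula (\ref{Nf em termos de N*g}) and evaluate the resulting geometric sums (the paper treats $b=0$ by the analogous direct computation rather than your difference argument, but that is only a cosmetic variation). Your intermediate evaluations of the two geometric progressions and the final simplifications to $\frac{q^n-(-1)^n}{q+1}$ and $\frac{2q^n+(-1)^n(q-1)}{q+1}$ all check out.
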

\begin{proof}
Let us consider the case where $b \ne 0$, because the other is analogous. In this case, (\ref{equação N*(h_k;k) linear}) from Lemma \ref{N*(h_k;k) exato} tells us $N^*(g_k) = \frac{(q - 1)^k}{q} - \frac{(-1)^k}{q}$, and (\ref{Nf em termos de N*g}) implies $N(f) = N^*(g) +\sum_{k = 1}^{n-1} N^*(g_k) q^{n - k - 1}$. Therefore,
\begin{align*}
    N(f) &= \frac{(q - 1)^n}{q} - \frac{(-1)^n}{q} + \sum_{k=1}^n q^{n - k - 1} \left[ \frac{(q - 1)^k}{q} - \frac{(-1)^k}{q}\right]\\
    &= \frac{(q - 1)^n}{q} - \frac{(-1)^n}{q} + q^{n-2}\left[ \sum_{k=1}^n \left(\frac{q - 1}{q}\right)^k + \left(\frac{-1}{q}\right)^k\right]\\
    &=\frac{q^n - (-1)^n}{q + 1}.
\end{align*}

%For each case we use the relation (\ref{equação N*(h_k;k) linear}) from Lemma \ref{N*(h_k;k) exato} in the corresponding equation in (\ref{Nf em termos de N*g}). 
\end{proof}

We remark that notably, the number of roots in $\F_q^n$ of the fully triangular polynomial $f$ is \textit{not} equal to the number of roots of the linear polynomial $g$, which is equal to $q^{n-1}$. We also remark that diagonal polynomials are triangular but not fully triangular, thus any result that requires $f$ to be fully triangular cannot be used on diagonal polynomials.

Notice that the coefficient vector of the fully triangular polynomial does not affect the number of roots of fully triangular polynomials $*$-equivalent to linear polynomials, yielding the following result:

\begin{corollary}\label{corolario principal linear}
    Let $f$ and $h$ be two fully triangular polynomial with $n$ variables of the form (\ref{triangular superior genérico}), such that the constant terms are zero in both of them or non-zero in both of them. If $f$ and $h$ are totally $*$-equivalent to linear polynomials they have the same number of roots. 
\end{corollary}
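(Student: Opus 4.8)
The plan is to obtain this directly from Theorem~\ref{resultado principal versão linear}. First I would invoke that theorem for $f$: since $f$ is a fully triangular polynomial in $n$ variables that is totally $*$-equivalent to a linear polynomial of the form (\ref{definição de g linear}), its number of roots in $\F_q^n$ equals $\frac{q^n-(-1)^n}{q+1}$ when its constant term is non-zero and $\frac{2q^n+(-1)^n(q-1)}{q+1}$ when its constant term is zero. Then I would apply the theorem again to $h$, which by hypothesis satisfies exactly the same assumptions: it is fully triangular, has $n$ variables, and is totally $*$-equivalent to a linear polynomial.

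The key observation — already recorded in the remark preceding the statement — is that the right-hand sides produced by Theorem~\ref{resultado principal versão linear} depend only on $q$, on the number of variables $n$, and on whether the constant term vanishes; in particular they do not involve the coefficient vector $(a_1,\dots,a_n)$, nor the exponents $d_{i,j}$, nor which particular linear polynomial $f$ (respectively $h$) is totally $*$-equivalent to. By hypothesis $f$ and $h$ have the same number of variables $n$, live over the same field $\F_q$, and have constant terms that are simultaneously zero or simultaneously non-zero; hence the same branch of the formula applies to both, and therefore $N(f)=N(h)$.

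There is essentially no obstacle here: the entire content of the corollary is already packaged inside Theorem~\ref{resultado principal versão linear}, and the only thing to verify is that both $f$ and $h$ genuinely meet its hypotheses, which is what is assumed. If a more self-contained argument were preferred, one could instead combine (\ref{Nf em termos de N*g}) with Lemma~\ref{N*(h_k;k) exato}: total $*$-equivalence to a linear polynomial $g$ forces $N^*(f_k)=N^*(g_k)$ for every $1\le k\le n$, while Lemma~\ref{N*(h_k;k) exato} shows that $N^*(g_k)$ depends only on $k$, on $q$, and on the vanishing of the constant term; substituting into (\ref{Nf em termos de N*g}) then gives the same value for $f$ and for $h$ term by term. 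Either route is a short computation with no hard step.
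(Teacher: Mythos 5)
Your proposal is correct and follows the same route as the paper: both simply apply Theorem~\ref{resultado principal versão linear} to $f$ and $h$ and observe that the resulting formula depends only on $q$, $n$, and whether the constant term vanishes, so the two polynomials share the same value of $N$. The alternative self-contained route you sketch via (\ref{Nf em termos de N*g}) and Lemma~\ref{N*(h_k;k) exato} is also fine but is just the proof of that theorem unwound.
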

\begin{proof}
    Even when $f$ and $h$ have different coefficients, Theorem \ref{resultado principal versão linear} implies that the number of roots depends only on the number of variables and also if the constant term is zero or not.
\end{proof}

In the following result we consider the cases when the fully triangular polynomial is totally $*$-equivalent to a quadratic diagonal polynomial.

\begin{theorem}\label{resultado principal versão quadratica}
Let $\F_q$ be a finite field with $q$ an odd number and $f$ be a fully triangular polynomial of the form (\ref{triangular superior genérico}). Let us suppose that $f$ is totally $*$-equivalent to a quadratic diagonal polynomial $g$ of the form (\ref{definição de g}). Let $\zeta_1, \zeta_2$ be the constants  as in Theorem \ref{soluções não-zero quadraticas}. We have that
\begin{enumerate}[a)]
    \item if $b \ne 0$, then
    {\footnotesize
\begin{align*}\label{resultado quando b não 0}
    N(f)
    =&\,  q^{n-2}(q - 1) - \frac{1}{2q} \cdot ( \zeta_1^{r} \zeta_2^{s} + \zeta_2^{r} \zeta_1^{s}) + \frac{ \eta(b)}{2(q (-1)^{(q-1)/2} )^{1/2}} \cdot ( \zeta_1^{r} \zeta_2^{s} - \zeta_2^{r} \zeta_1^{s}) \\
    &+
    \sum_{k=1}^{n-1} q^{n - k - 1} \left( - \frac{1}{2q} \cdot ( \zeta_1^{r(k)} \zeta_2^{s(k)} + \zeta_2^{r(k)} \zeta_1^{s(k)}) + \frac{ \eta(b)}{2(q (-1)^{(q-1)/2} )^{1/2}} \cdot ( \zeta_1^{r(k)} \zeta_2^{s(k)} - \zeta_2^{r(k)} \zeta_1^{s(k)}) \right);
\end{align*}}

    \item if $b = 0$, then
    \begin{align*}
    N(f) =&\, q^{n-1} + q^{n-2}(q - 1) + \left( \frac{q - 1}{2q}\right) (\zeta_1^{r} \zeta_2^{s} + \zeta_2^{r} \zeta_1^{s})\\
    &+ \left( \frac{q - 1}{2}\right) \sum_{k=1}^{n-1} q^{n - k - 2}  (\zeta_1^{r(k)} \zeta_2^{s(k)} + \zeta_2^{r(k)} \zeta_1^{s(k)}).
\end{align*}
\end{enumerate}

\end{theorem}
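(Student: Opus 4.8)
The plan is to simply combine the substitution formula (\ref{Nf em termos de N*g}) with the explicit values of $N^*(g_k)$ furnished by Theorem \ref{soluções não-zero quadraticas}, exactly as was done in the proof of Theorem \ref{resultado principal versão linear}. Since $f$ is assumed totally $*$-equivalent to the quadratic diagonal polynomial $g$, equation (\ref{Nf em termos de N*g}) gives $N(f) = N^*(g) + \sum_{k=1}^{n-1} N^*(g_k) q^{n-k-1}$ when $b \ne 0$, and the same with an extra $q^{n-1}$ term when $b = 0$. So the whole proof amounts to plugging in (\ref{equação N*(h_k;k) quadratica b ne 0}) for the $b \ne 0$ case and (\ref{equação N*(h_k;k) quadratica b = 0}) for the $b = 0$ case, where $N^*(g) = N^*(g_n)$ corresponds to taking $k = n$, i.e. to the pair $(r,s) = (r(n), s(n))$.

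First I would treat the case $b \ne 0$. Write $N^*(g_k) = \frac{(q-1)^k}{q} + T(k)$, where $T(k)$ collects the two $\zeta$-terms from (\ref{equação N*(h_k;k) quadratica b ne 0}). Then
\begin{equation*}
N(f) = \frac{(q-1)^n}{q} + T(n) + \sum_{k=1}^{n-1} q^{n-k-1}\left(\frac{(q-1)^k}{q} + T(k)\right).
\end{equation*}
The key computation is the geometric-type sum $\frac{(q-1)^n}{q} + \sum_{k=1}^{n-1} q^{n-k-1}\frac{(q-1)^k}{q} = q^{n-2}\sum_{k=1}^{n}\left(\frac{q-1}{q}\right)^k \cdot q = q^{n-1}\sum_{k=1}^n \left(\frac{q-1}{q}\right)^k$, which telescopes to $q^{n-2}(q-1)$ after using $\sum_{k=1}^n ((q-1)/q)^k = q(1 - ((q-1)/q)^n) = q - (q-1)^n q^{1-n}$; one checks this produces exactly $q^{n-2}(q-1)$, matching the leading term in the claimed formula. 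The remaining terms are precisely $T(n) + \sum_{k=1}^{n-1} q^{n-k-1} T(k)$, which is literally the tail written in the statement with $r = r(n)$, $s = s(n)$. The case $b = 0$ is handled identically using (\ref{equação N*(h_k;k) quadratica b = 0}); the only differences are the extra $q^{n-1}$ from (\ref{Nf em termos de N*g}), which appears unchanged in the answer, and the coefficient $\frac{q-1}{2q}$ in place of $\frac{1}{2q}$ on the $\zeta$-terms, which propagates directly through the linear sum.

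The main (and only) obstacle is bookkeeping: one must be careful that the index $k=n$ term coming from $N^*(g) = N^*(g_n)$ is separated out so that its $\zeta$-coefficient is $-\frac{1}{2q}$ (respectively $\frac{q-1}{2q}$) rather than being absorbed into the sum with an extra power of $q$, and that the sign conventions on $\eta(b)$ and on $(q(-1)^{(q-1)/2})^{1/2}$ are carried verbatim from Theorem \ref{soluções não-zero quadraticas}. Beyond verifying the one scalar geometric sum above, no genuinely new idea is needed — this theorem is a direct corollary of the machinery already assembled, just as Theorem \ref{resultado principal versão linear} was in the linear case.
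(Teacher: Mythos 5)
Your approach is exactly the paper's: substitute the expressions for $N^*(g_k)$ from Theorem \ref{soluções não-zero quadraticas} into (\ref{Nf em termos de N*g}), keep the $k=n$ term (i.e.\ $N^*(g)$) with coefficient $1$, and collapse the $(q-1)^k/q$ contributions into the leading term $q^{n-2}(q-1)$, the $\zeta$-terms passing through untouched. Your final answer and the structure of the argument are correct, but the intermediate identities you display for the key sum are miswritten: the $k=n$ term cannot be absorbed as $q^{n-1}\sum_{k=1}^n\bigl(\tfrac{q-1}{q}\bigr)^k$ (it enters with weight $1$, not $q^{-1}\cdot q^{n-1}\cdot q$), and $\sum_{k=1}^n\bigl(\tfrac{q-1}{q}\bigr)^k=(q-1)\bigl(1-\bigl(\tfrac{q-1}{q}\bigr)^n\bigr)$, not $q\bigl(1-\bigl(\tfrac{q-1}{q}\bigr)^n\bigr)$. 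The correct one-line computation is
\begin{equation*}
\frac{(q-1)^n}{q}+\sum_{k=1}^{n-1}q^{n-k-1}\frac{(q-1)^k}{q}
=\frac{(q-1)^n}{q}+q^{n-2}(q-1)\Bigl(1-\Bigl(\tfrac{q-1}{q}\Bigr)^{n-1}\Bigr)
=q^{n-2}(q-1),
\end{equation*}
since $q^{n-2}(q-1)\bigl(\tfrac{q-1}{q}\bigr)^{n-1}=\tfrac{(q-1)^n}{q}$; with this repair your proposal matches the paper's proof, which performs the same substitution and leaves the identical simplification implicit.
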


\begin{proof}
%For each case we use the relation (\ref{equação N*(h_k;k) quadratica}) from Lemma \ref{soluções não-zero quadraticas} in the corresponding equation in (\ref{Nf em termos de N*g}). Then, by means of a simple calculation we get the desired result.

Let us prove the result in the case when $b \ne 0$, because the other case is analogous. In this case, (\ref{equação N*(h_k;k) quadratica b ne 0}) in Theorem \ref{N(g_ij;i,j) valor explícito} implies
\begin{equation*}
    \begin{split}
    N^*(g_k) =&\, \frac{(q - 1)^{k}}{q}
    - \frac{1}{2q}  ( \zeta_1^{r(k)} \zeta_2^{s(k)} + \zeta_2^{r(k)} \zeta_1^{s(k)}) \\
    &+ \frac{ \eta(b)}{2(q (-1)^{(q-1)/2} )^{1/2}}  ( \zeta_1^{r(k)} \zeta_2^{s(k)} - \zeta_2^{r(k)} \zeta_1^{s(k)}),
\end{split}
\end{equation*}
and from (\ref{Nf em termos de N*g}) we have $N(f) = N^*(g) +\sum_{k = 1}^{n-1} N^*(g_k) q^{n - k - 1}$. Therefore,
{\footnotesize\begin{align*}
    N(f)
    =&\,  \frac{(q-1)^n}{q} - \frac{1}{2q} \cdot ( \zeta_1^{r} \zeta_2^{s} + \zeta_2^{r} \zeta_1^{s}) + \frac{ \eta(b)}{2(q (-1)^{(q-1)/2} )^{1/2}} \cdot ( \zeta_1^{r} \zeta_2^{s} - \zeta_2^{r} \zeta_1^{s}) \\
    &+
    \sum_{k=1}^{n-1} q^{n - k - 1} \left( \frac{(q-1)^k}{q} - \frac{1}{2q} \cdot ( \zeta_1^{r(k)} \zeta_2^{s(k)} + \zeta_2^{r(k)} \zeta_1^{s(k)}) + \frac{ \eta(b)}{2(q (-1)^{(q-1)/2} )^{1/2}} \cdot ( \zeta_1^{r(k)} \zeta_2^{s(k)} - \zeta_2^{r(k)} \zeta_1^{s(k)}) \right),
\end{align*}}
which simplifies to obtain the desired result.
\end{proof}

Notice that the specific values in the vector coefficient $(a_1, a_2, \dots, a_n, -b)$ do not matter. In fact, to determine the number of roots is which of the coefficients $a_j$'s and $b$ are squares or not. Thus we have the following result:

\begin{corollary}\label{corolario principal quadratico}
    Let $\F_q$ be a finite field with $q$ an odd number. Let $f$ and $h$ be two fully triangular polynomial with $n$ variables of the form (\ref{triangular superior genérico}) with coefficient vectors $(a_1, a_2, \dots, a_n, -b)$ and $(c_1, c_2, \dots, c_n, - d)$ respectively. Let us suppose that $\eta(a_1) = \eta(c_1),\dots,\eta(a_n) = \eta(c_n),\, \eta(b) = \eta(d)$. If $f$ and $h$ are totally $*$-equivalent to quadratic diagonal polynomials, they have the same number of roots. 
\end{corollary}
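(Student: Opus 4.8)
The plan is to read off $N(f)$ and $N(h)$ from Theorem~\ref{resultado principal versão quadratica} and verify that every ingredient appearing in the closed-form expression is the same for the two polynomials. First I would record that the hypothesis ``$f$ is totally $*$-equivalent to a quadratic diagonal polynomial'' means Theorem~\ref{resultado principal versão quadratica} applies to $f$, with the associated diagonal polynomial $g = a_1 x_1^2 + \dots + a_n x_n^2 - b$, and likewise applies to $h$ with $g' = c_1 x_1^2 + \dots + c_n x_n^2 - d$. Inspecting the right-hand sides of parts a) and b) of that theorem, the only data entering the formula for $N(f)$ are: the field size $q$, the number of variables $n$, whether the constant term vanishes, the constants $\zeta_1,\zeta_2$, the value $\eta(b)$, and the integers $r(k),s(k)$ for $1 \le k \le n$. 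Among these, $q$ and $n$ agree by hypothesis, and $\zeta_1,\zeta_2$ depend only on $q$.

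Next I would check that $r(k)$ and $s(k)$ coincide for $f$ and $h$. By Definition~\ref{r(k) e s(k)}, $r(k) = \#\{1 \le j \le k : \eta(a_j) = 1\}$ and $s(k) = \#\{1 \le j \le k : \eta(a_j) = -1\}$; since $\eta(a_j) = \eta(c_j)$ for every $j$, replacing the $a_j$ by the $c_j$ leaves these counts unchanged, so $r(k),s(k)$ are the same in both cases. Similarly $\eta(b) = \eta(d)$ by hypothesis. Finally, using the standard convention $\eta(0) = 0$, the equality $\eta(b) = \eta(d)$ forces $b = 0$ if and only if $d = 0$, so $f$ and $h$ fall into the same case (a) or (b) of the theorem. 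Substituting this common data into the formula of Theorem~\ref{resultado principal versão quadratica} yields $N(f) = N(h)$.

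The argument is essentially bookkeeping, so I do not expect a genuine obstacle; the only point needing a little care is the dichotomy $b = 0$ versus $b \neq 0$, which must be seen to be controlled by $\eta(b) = \eta(d)$ together with the convention $\eta(0) = 0$. Without that observation one would be forced to add ``$b = 0$ if and only if $d = 0$'' as an extra hypothesis, so I would state this convention explicitly at the start of the proof.
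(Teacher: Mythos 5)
Your proposal is correct and follows essentially the same route as the paper: the paper's proof simply observes that $r(k)$ and $s(k)$ coincide for the two polynomials and then invokes Theorem~\ref{resultado principal versão quadratica}. Your additional remark that $\eta(b)=\eta(d)$ (with the convention $\eta(0)=0$) forces $b=0$ if and only if $d=0$, so both polynomials land in the same case of that theorem, is a point the paper leaves implicit, and making it explicit is a reasonable refinement rather than a different argument.
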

\begin{proof}
    The values of $r(k)$ and $s(k)$ for $1 \le k \le n$ will be the same for both polynomials. Hence from Theorem \ref{corolario principal quadratico} they both have the same number of roots in $\F_q^n$.
\end{proof}

Then for every choice of which coefficients in the coefficient vector we have fixed values for $r(k),s(k)$ for $1 \le k \le n$. We can substitute these values in Theorem \ref{teorema soluções quadraticas} to find a closed expression for the number of roots. We will do this for two specific cases.

\begin{corollary}\label{caso tudo quadrado}
    Let $\F_q$ be a finite field with $q$ an odd number and $f$ be a fully triangular polynomial of the form (\ref{triangular superior genérico}) such that the coefficients $a_1, \dots, a_n$ are either all squares, or are all non squares in $\F_q$. Let us suppose that $f$ is totally $*$-equivalent to a quadratic diagonal polynomial $g$ of the form (\ref{definição de g}). We have that

    \begin{enumerate}[a)]
    \item if $b \ne 0$, then
\begin{align*}
    N(f)
    =&\,  q^{n-2}(q - 1) - \frac{1}{2q} \cdot \left( \frac{\zeta_1^{n+1} - (q - 1) \zeta_1^n - \zeta_1 q^{n-1}}{\zeta_1 - q} + \frac{\zeta_2^{n+1} - (q - 1) \zeta_2^n - \zeta_2 q^{n-1}}{\zeta_2 - q} \right) \\
    &+\frac{ \varepsilon \eta(b)}{2(q (-1)^{(q-1)/2} )^{1/2}} \cdot \left( \frac{\zeta_1^{n+1} - (q - 1) \zeta_1^n - \zeta_1 q^{n-1}}{\zeta_1 - q} - \frac{\zeta_2^{n+1} - (q - 1) \zeta_2^n - \zeta_2 q^{n-1}}{\zeta_2 - q}  \right) ;
\end{align*}
    where 
    $$\varepsilon = \begin{cases}
        1, & \text{ if } a_1, \dots, a_n \text{ are squares}, \\
        -1, & \text{ if } a_1, \dots, a_n \text{ are non squares}. 
    \end{cases}$$ 

    \item if $b = 0$, then
    \begin{align*}
    N(f) =&\, 2q^{n-1} - q^{n-2}\\& + \frac{q-1}{2q}\left(\frac{\zeta_1^{n+1} - (q-1)\zeta_1^n - \zeta_1q^{n-1}}{\zeta_1 - q} + \frac{\zeta_2^{n+1} - (q-1)\zeta_2^n - \zeta_2q^{n-1}}{\zeta_2 - q} \right),
\end{align*}
in both cases.
\end{enumerate}
\end{corollary}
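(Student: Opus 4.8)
The plan is to specialize Theorem \ref{resultado principal versão quadratica} to the two coefficient configurations in question. First I would record the behaviour of the counting functions from Definition \ref{r(k) e s(k)}: if $a_1,\dots,a_n$ are all squares then $r(k)=k$ and $s(k)=0$ for every $1\le k\le n$, whereas if they are all non-squares then $r(k)=0$ and $s(k)=k$. In either case $\zeta_1^{r(k)}\zeta_2^{s(k)}+\zeta_2^{r(k)}\zeta_1^{s(k)}=\zeta_1^{k}+\zeta_2^{k}$, a quantity insensitive to which of the two cases we are in, while $\zeta_1^{r(k)}\zeta_2^{s(k)}-\zeta_2^{r(k)}\zeta_1^{s(k)}=\varepsilon(\zeta_1^{k}-\zeta_2^{k})$ with $\varepsilon=+1$ in the all-squares case and $\varepsilon=-1$ in the all-non-squares case, exactly the $\varepsilon$ appearing in the statement. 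Taking $k=n$ (i.e. $r=n,\ s=0$, resp. $r=0,\ s=n$) turns $\zeta_1^{r}\zeta_2^{s}\pm\zeta_2^{r}\zeta_1^{s}$ into $\zeta_1^{n}+\zeta_2^{n}$, resp. $\varepsilon(\zeta_1^{n}-\zeta_2^{n})$.

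Next I would reduce the remaining sums to closed form. After the substitution above, every occurrence of $\zeta_i$ ($i=1,2$) in Theorem \ref{resultado principal versão quadratica}(a) is collected into the single expression
$$S_i:=\zeta_i^{n}+\sum_{k=1}^{n-1}q^{\,n-k-1}\zeta_i^{k}.$$
Writing $q^{\,n-k-1}\zeta_i^{k}=q^{\,n-1}(\zeta_i/q)^{k}$ and summing the finite geometric series — legitimate since $\zeta_i\neq q$: for $q\equiv1\pmod 4$ one has $\zeta_1=\sqrt q-1<q$ and $\zeta_2=-\sqrt q-1<0$, and for $q\equiv3\pmod 4$ both $\zeta_i=\pm i\sqrt q-1$ are non-real — one obtains, after combining the lone term $\zeta_i^{n}$ over the common denominator,
$$S_i=\frac{\zeta_i^{\,n+1}-(q-1)\zeta_i^{\,n}-\zeta_i q^{\,n-1}}{\zeta_i-q}.$$
Feeding $S_1$ and $S_2$ back into Theorem \ref{resultado principal versão quadratica}(a) reproduces part (a) verbatim. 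For part (b), Theorem \ref{resultado principal versão quadratica}(b) carries no $\eta(b)$-term and has an extra factor $q^{-1}$ inside its sum; the same computation (now the $\zeta_i$'s are gathered into $q^{-1}S_i$) together with the identity $q^{\,n-1}+q^{\,n-2}(q-1)=2q^{\,n-1}-q^{\,n-2}$ yields part (b). Since only the symmetric combination $\zeta_1^{k}+\zeta_2^{k}$ occurs in part (b), its value is the same in the all-squares and all-non-squares cases, which is why the statement closes with ``in both cases''.

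I do not expect a genuine obstacle here: the argument is a direct specialization followed by the evaluation of a single finite geometric series. The only points requiring care are the bookkeeping that merges the isolated $\zeta_i^{n}$ term with the sum so that the denominator $\zeta_i-q$ emerges, and the trivial verification that $\zeta_i\neq q$ so the geometric-series formula applies; the degenerate case $n=1$ (empty sum) is consistent, as the closed form for $S_i$ then collapses to $\zeta_i$.
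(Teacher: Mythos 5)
Your proposal is correct and follows essentially the same route as the paper: substitute the values of $r(k)$ and $s(k)$ into Theorem \ref{resultado principal versão quadratica} and evaluate the resulting geometric sums, merging the isolated $\zeta_i^{n}$ term to get the closed form with denominator $\zeta_i-q$ (and your check that $\zeta_i\neq q$ and that $n=1$ degenerates correctly is sound). The only difference is in the all-non-squares case: the paper multiplies $f$ by a non-square $a$ to reduce to the all-squares case with $\eta(ab)=-\eta(b)$, while you substitute $r(k)=0$, $s(k)=k$ directly and read off $\varepsilon=-1$ from the antisymmetric combination $\zeta_1^{r(k)}\zeta_2^{s(k)}-\zeta_2^{r(k)}\zeta_1^{s(k)}$ --- an equally valid and, if anything, more self-contained verification.
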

\begin{proof}
    In the case when $a_1, \dots, a_n$ are squares we have $r(k) = k,\, s(k) = 0$ for $1 \le k \le n$. Substituting into the expressions in Theorem \ref{resultado principal versão quadratica} and computing the geometric sums yields the result.

    For the case when none of the coefficients $a_1, \dots, a_n$ is a square, we can multiply $f$ by a non square coefficient $a$ to obtain a polynomial with exactly the same roots, but whose coefficients $a a_1, \dots, a a_n$ are squares, and the constant term $ab$ is such that $\eta(ab) = - \eta(b)$. Thus in the case when $b = 0$ the number of roots is exactly the same as the all squares case, and in the case $b \ne 0$ the expression is essentially the same with a couple signs changed.
\end{proof}

\end{document}